\documentclass{article}
\usepackage[left=2.50cm, right=2.50cm, top=2.50cm, bottom=2.50cm]{geometry} 
\usepackage{helvet}
\usepackage{amsmath, amsfonts, amssymb} 
\usepackage[english]{babel}
\usepackage{graphicx}   
\usepackage{url}        
\usepackage{bm}      
\usepackage{multirow}
\usepackage{booktabs}
\usepackage{titlesec}
\usepackage{amsmath}
\usepackage{mathrsfs}
\usepackage{amsmath, amsthm}
\usepackage{cite}
\usepackage{verbatim}
\usepackage{todonotes}
\usepackage[table]{xcolor} 

\newcommand{\adef}{\leftarrow}

\usepackage{algorithm}
\usepackage{algpseudocode}
\usepackage{float}
\usepackage{mathtools}

\algrenewcommand\algorithmicwhile{\textbf{While}}
\algrenewcommand\algorithmicfor{\textbf{For}}
\algrenewcommand\algorithmicdo{\textbf{Do}}
\algrenewcommand\algorithmicif{\textbf{If}}
\algrenewcommand\algorithmicthen{\textbf{Then}}
\algrenewcommand\algorithmicelse{\textbf{Else}}
\algrenewcommand\algorithmicend{\textbf{End}}
\algrenewcommand\algorithmicreturn{\textbf{Return}}

\titleformat{\section}
  {\normalfont\Large\bfseries}{\llap{\thesection\quad}}{0pt}{}

\newtheorem{definition}{Definition}[section]
\newtheorem{theorem}{Theorem}[section]
\newtheorem{corollary}{Corollary}[section]
\newtheorem{remark}{Remark}[section]
\newtheorem{lemma}{Lemma}[section]
\newtheorem{example}{Example}[section]
\newtheorem{proposition}{Proposition}[section]
\newtheorem*{theorem*}{Theorem}
\newtheorem*{maintheorema}{Main Theorem A}
\newtheorem*{maintheoremb}{Main Theorem B}

\newcommand{\Hq}{\mathbb{H}}
\newcommand{\R}{\mathbb{R}}

\newcommand{\qi}{\mathbf{i}}
\newcommand{\qj}{\mathbf{j}}
\newcommand{\qk}{\mathbf{k}}

\newcommand{\Cj}[1]{{#1}^\ast}

\newcommand{\QNorm}[1]{\nu(#1)}

\usepackage{fancyhdr} 
\usepackage{hyperref} 
\hypersetup{colorlinks, unicode} 
\usepackage{multicol}
\title{\textbf{Kempe factorizations for rational curves on $\operatorname{SO}_4(\mathbb{R})$}}
\author{\sffamily Yifan Li, \quad \sffamily Zijia Li, \quad \sffamily Ke Ye}
\date{\today}
\begin{document}
	\maketitle
\begin{abstract}
We study constructive Kempe factorizations for rational curves on $\operatorname{SO}_4(\mathbb{R})$. Motivated by motion-polynomial factorization and rational matrix curves on real classical groups, we prove that every rational curve of degree $2d$ with $d\ge1$ first factors into $d$ quadratic rational curves and then into a product of at most $2d$ planar rotation curves, where each planar rotation curve fixes a two-dimensional plane pointwise. The construction proceeds by extracting left and right isoclinic polynomial parts via Cayley's factorization, factoring the corresponding quaternion polynomials, and pairing linear factors with equal norm polynomials. The resulting algorithms are explicit and are illustrated by examples.
\end{abstract}
	
\section{Introduction}

\subsection{Kempe factorization and rational motion synthesis}

Kempe's Universality Theorem~\cite{Kempe75} shows that bounded segments of planar algebraic curves can be traced by linkages with revolute joints. Its constructive nature makes it a foundational result in mechanism theory, but the original construction is far from economical: even low-degree curves may require a large number of links and joints. This gap between existence and efficient construction motivates a more algebraic question: when an algebraic motion is given, can it be decomposed into simple motion primitives of controlled degree?

In this paper we restrict the problem to rational curves. This restriction is motivated by two standard features of rational parametrizations. On the algebraic side, rational curves are precisely the algebraic curves admitting explicit parametrizations by rational functions, and they form the natural class for symbolic parametrization and factorization algorithms \cite{SW99,SWP08,Gordon65zero}. On the kinematic side, rational motions are compatible with the rational B\'{e}zier/NURBS representations used in computer-aided geometric design and mechanism synthesis \cite{PW10}. We therefore do not attempt to decompose arbitrary algebraic curves in matrix groups, but focus on rational curves and require all factors in the factorization to remain rational. It is worth emphasizing that the rational curves considered here are rational matrix curves in a real algebraic group arising from motion factorization, rather than rational curves on projective varieties in the usual algebraic-geometric sense.

For rational motions in three-dimensional Euclidean space, motion-polynomial factorization provides a powerful method. A motion polynomial can be factored into linear factors, each corresponding to a rotation about a fixed axis in $\mathbb{R}^3$; geometrically, these factors correspond to lines on the Study quadric~\cite{Hegedus13,Scharler17}. This point of view underlies Kempe-type factorizations for rational curves~\cite{LSS19,GKLRSV17} and has been developed further in geometric algebra and linkage synthesis~\cite{Hegedus13,LJS18,Thimm2026}. A matrix-based formulation also extends the problem to rational curves on homogeneous spaces~\cite{Liye2024rational}.

Compared with these motion-polynomial and homogeneous-space approaches, this paper focuses on the factorization inside $\operatorname{SO}_4(\mathbb{R})$. This is not a direct extension of motion-polynomial factorization in $\operatorname{SE}_3$. In the matrix model for $\operatorname{SO}_4(\mathbb R)$, a linear quaternion factor naturally gives rise to a quadratic matrix curve. Thus the basic building blocks in our factorization are quadratic matrix curves rather than linear ones. The gain is that the left--right isoclinic splitting reduces the construction to ordinary quaternion-polynomial factorization and makes the planar-rotation refinement explicit. For convenience, we compare the existing methods with ours in Table~\ref{tab:method-comparison}.

\begin{table}[htbp]
\centering
\caption{Comparison with related factorization methods}
\label{tab:method-comparison}
\begin{tabular}{p{0.27\textwidth}p{0.4\textwidth}p{0.15\textwidth}}
\toprule
Method & Ambient space & Primitive factor \\
\midrule
Motion-polynomial \cite{GKLRSV17,LJS18,Li25geometric}
& $\operatorname{SE}_2$, $\operatorname{SE}_3$, $\operatorname{SO}_{3,1}$, etc.
& linear \\
\midrule
Rational curves \cite{Liye2024rational}
& general homogeneous spaces
& quadratic \\
\midrule
Cayley's factorization
& $\operatorname{SO}_4(\mathbb{R})$
& planar motion \\
\bottomrule
\end{tabular}
\end{table}

\subsection[Why SO(4)]{Why $\operatorname{SO}(4)$}

The four-dimensional rotation group is a natural next testing ground for this constructive theory. On the one hand, $\operatorname{SO}_4(\mathbb{R})$ is already rich enough to contain two independent rotation planes. Thus, its low-degree factors are not simply the same as the one-axis rotations in $\operatorname{SO}_3(\mathbb{R})$. On the other hand, $\operatorname{SO}_4(\mathbb{R})$ has a special algebraic structure: every rotation admits a Cayley factorization into left and right isoclinic components, reflecting the quaternionic description of $\operatorname{Spin}(4)$. This makes $\operatorname{SO}_4(\mathbb{R})$ a particularly suitable setting for translating rational matrix curves into quaternion polynomial factorization.

This choice is also supported by the existing four-dimensional kinematics literature. Quaternionic models, kinematic mappings, and hypersphere conditions for motions in four dimensional Euclidean space have been studied in several closely related forms~\cite{Nawratil2014kinematic,Nawratil2016quaternionic,Nawratil2016fundamentals}. In the rotation-only case, the factorization of a four-dimensional rotation into isoclinic components is precisely Cayley's factorization of $4$D rotations~\cite{Perez17cayley}; see also the quaternionic account of rotations in $\mathbb{R}^4$ in~\cite{Weiner05quaternion}. From the Lie-theoretic side, this is the familiar special feature that 
\[\operatorname{Spin}(4) \cong \operatorname{Spin}(3) \times \operatorname{Spin}(3) \cong \mathbb{H}_1 \times \mathbb{H}_1,
\]
a perspective compatible with the Clifford and Lie group background in~\cite{lounesto2001clifford,hall2013lie}. Here $\mathbb H_1$ denotes the group of unit quaternions, and $\operatorname{Spin}(n)$ is the spin group, which is the double cover of $\operatorname{SO}_n(\mathbb{R})$.

The main difference from classical motion-polynomial factorization \cite{GKLRSV17,LJS18,LSS19,Thimm2026} is that we work directly with rational curves in the matrix representation of $\operatorname{SO}_4(\mathbb{R})$. This matrix viewpoint matches the general framework of rational curves on real classical groups~\cite{Liye2024rational}. In this representation the natural primitive degree is two rather than one. Indeed, a linear quaternion factor gives rise, after passing to the matrix representation, to a quadratic rational curve. Thus the first meaningful factorization problem is not into linear matrix curves, but into quadratic rational curves and then into planar rotation curves. In this sense, $\operatorname{SO}_4(\mathbb{R})$ is the first setting in which the quaternionic factorization, the matrix model, and a genuinely planar rotation primitive meet in a nontrivial but still explicitly controllable way.

\subsection{Main results}
The main result of the paper is a factorization into planar rotations. A planar rotation curve is a quadratic rational curve whose values rotate one fixed two-dimensional plane and fix its orthogonal complement pointwise. Informally, the main theorem is the following.

\begin{maintheorema}[planar-rotation factorization; see Theorem~\ref{thm:futherdecomp}]\label{thm:mainA}
Every rational curve on $\operatorname{SO}_4(\mathbb{R})$ of degree $2d$ admits a factorization into a product of at most $2d$ planar rotation curves.
\end{maintheorema}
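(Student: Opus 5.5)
The plan is to pass from the rational matrix curve to a pair of quaternion polynomials via Cayley's factorization, factor those polynomials into linear factors, and then recombine the linear factors in pairs of equal norm, each pair giving a planar rotation.

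\textbf{Lifting to quaternion polynomials.} Let $R(t)$ be a rational curve on $\operatorname{SO}_4(\mathbb{R})$ of degree $2d$. Every rotation of $\mathbb{R}^4\cong\mathbb{H}$ has the form $x\mapsto p\,x\,\Cj{q}$ with $\QNorm{p}=\QNorm{q}$. I would first lift $R(t)$ to a pair of quaternion polynomials $P(t),Q(t)$ of degree $d$ with $\QNorm{P}=\QNorm{Q}$, so that
\[
R(t)=\frac{L_{P(t)}\,R_{\Cj{Q(t)}}}{\QNorm{P(t)}},
\]
where $L$ and $R$ denote left and right multiplication matrices. This step extracts the left and right isoclinic polynomial parts. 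The lift should follow from the rationality of $R(t)$ together with the bilinear, quadratic dependence of the matrix entries on the coefficients of $(p,q)$: the entries of $R$ are the products $p_i q_j$ up to sign. One chooses a common denominator and removes real polynomial gcd factors so that the degrees balance to $d$.

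\textbf{Factoring and pairing.} Next I would factor $P=(t-h_1)\cdots(t-h_d)$ and $Q=(t-k_1)\cdots(t-k_d)$ by the classical quaternion polynomial factorization, after dividing out the real polynomial parts, which contribute only scalars. The order of the quadratic norm factors can be prescribed. Since $\QNorm{P}=\QNorm{Q}$, I would fix one ordering $M_1\cdots M_d$ of the irreducible quadratic factors of the norm and factor both $P$ and $Q$ with that same ordering. This gives $\QNorm{t-h_i}=\QNorm{t-k_i}=M_i$. Because left and right multiplications commute, $R=\prod_i R_i$ with
\[
R_i(t)=\frac{L_{t-h_i}\,R_{\Cj{(t-k_i)}}}{M_i}.
\]
This is the factorization into $d$ quadratic rational curves.

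\textbf{Splitting each quadratic factor.} Each $R_i$ is a quadratic curve $x\mapsto(t-h)\,x\,\overline{(t-k)}/M$ with $h$ and $k$ sharing real part $a$ and having imaginary norm $b^2$. Writing $h=a+b\mathbf{u}$ and $k=a+b\mathbf{v}$ with $\mathbf{u},\mathbf{v}$ unit imaginary quaternions, I would split the double rotation into two simple rotations: a left isoclinic part by $\mathbf{u}$ and a right isoclinic part by $\mathbf{v}$, each of the same angle $\theta(t)$. This amounts to the identity $e^{\theta\mathbf{u}}\,x\,e^{-\theta\mathbf{v}}$, in which the rotation decomposes as a rotation in a plane $\Pi$ by $2\theta$ composed with the identity on $\Pi^\perp$. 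More concretely, the invariant planes are $\Pi_{\pm}=\{x:\mathbf{u}x=\pm x\mathbf{v}\}$, and the rotation angles on them are $0$ and $2\theta$ (or $2\theta$ and $0$), up to conjugation. So for the right sign choice a single quadratic factor is already planar.

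In general I would instead write $R_i=A_i B_i$, with $A_i$ a planar rotation in one invariant plane $\Pi_+$ by angle $\theta+\theta$ and $B_i$ a planar rotation in $\Pi_-$. Each should be a quadratic rational curve, obtained as a $(t-h)\,\cdot\,\overline{(t-k')}$-type product with $k'=a-b\mathbf{v}$. The two factors commute, since their planes are orthogonal and fixed. This gives $2d$ planar factors in total, and fewer when $\mathbf{u}=\pm\mathbf{v}$ or when some factor is degenerate.

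\textbf{Main obstacle.} I expect the main obstacle to be this last step: checking that the planar pieces of a quadratic factor are themselves \emph{rational} of degree $2$, with a fixed plane independent of $t$. I would attack it by noting that $\Pi_\pm$ depend only on $\mathbf{u},\mathbf{v}$ and not on $t$, and then computing the restriction to $\Pi_+$ explicitly as the rotation by the complex number $(t-a-b\mathbf{i})^2/M$, which is rational and quadratic.
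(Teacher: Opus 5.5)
Your first step fails, and the rest of the argument depends on it. A rational curve of degree $2d$ does not in general lift to quaternion polynomials $P,Q$ of degree $d$ with $\QNorm{P}=\QNorm{Q}$.

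\textbf{Why the degree-$d$ lift fails.} Over $\mathbb{R}(t)$, the only pairs acting trivially by $x\mapsto p\,x\,\Cj{q}$ are $(\lambda,\lambda^{-1})$ with $\lambda\in\mathbb{R}(t)^\times$. So every polynomial lift is $(\lambda\mathcal{L},\mu\mathcal{R})$ with $\lambda,\mu\in\mathbb{R}[t]$, where $\mathcal{L},\mathcal{R}$ are the primitive isoclinic parts. These satisfy $\QNorm{\mathcal{L}}\QNorm{\mathcal{R}}=q^2$, but in general $\QNorm{\mathcal{L}}\neq\QNorm{\mathcal{R}}$. In the paper's notation they are $q_1\cdots q_s\,q_{s+1}^2\cdots q_{s+a}^2$ and $q_1\cdots q_s\,q_{s+a+1}^2\cdots q_d^2$, so $\deg\mathcal{L}=s+2a$ and $\deg\mathcal{R}=2d-s-2a$. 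Balancing the norms forces real polynomial factors back in. A balanced lift of degree $d$ exists only when $\QNorm{\mathcal{L}}=\QNorm{\mathcal{R}}=q$.

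For instance, $\gamma\colon x\mapsto(t-\qi)(t-\qj)\,x/(t^2+1)$ has degree $2$. Its lifts are $\bigl(\lambda(t-\qi)(t-\qj),\mu\bigr)$, and none has degree $1$. Your phrase ``dividing out the real polynomial parts, which contribute only scalars'' is exactly the faulty step: dividing them out destroys $\QNorm{P}=\QNorm{Q}$, which your pairing needs.

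Your own third paragraph exposes the problem. The $\Pi_\pm$ computation correctly shows that \emph{every} factor $x\mapsto(t-h)\,x\,\Cj{(t-k)}/M$ with $\QNorm{t-h}=\QNorm{t-k}=M$ fixes $\Pi_+=\{x:\mathbf{u}x=x\mathbf{v}\}$ pointwise; no sign choice is involved. So a degree-$d$ lift would yield only $d$ planar factors. But the $\gamma$ above is not planar: $\gamma(0)$ is left multiplication by $\qk$ and fixes no nonzero vector. The splitting $R_i=A_iB_i$ you propose is therefore moot. As described it also cannot reproduce $R_i$, since $R_i$ fixes $\Pi_+$ while your $A_i$ rotates it.

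\textbf{The repair.} Keep the real factors. Take $\lambda=q_{s+a+1}\cdots q_d$ and $\mu=q_{s+1}\cdots q_{s+a}$. Then $P=\lambda\mathcal{L}$ and $Q=\mu\mathcal{R}$ satisfy $\QNorm{P}=\QNorm{Q}$, have degree $2d-s$, and give $\gamma(t)\colon x\mapsto P\,x\,\Cj{Q}/\QNorm{P}$ as a non-reduced fraction.
\begin{enumerate}
\item Factor $\mathcal{L}$ and $\mathcal{R}$ with prescribed norm orders.
\item Write each central factor $q_j$ as $(t-e_j)(t-\Cj{e_j})$, and insert these so that the norms of $P$ and $Q$ match factor by factor.
\item This gives $2d-s\le 2d$ factors $x\mapsto(t-h_i)\,x\,\Cj{(t-k_i)}/M_i$, each planar by your own computation.
\end{enumerate}

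This is the paper's proof in different bookkeeping. The paper keeps $\mathcal{L},\mathcal{R}$ primitive with unequal degrees. It pairs equal-norm linear factors left--right (planar by Lemma~\ref{lem:mid_plane}) or left--left/right--right (isoclinic quadratic factors). It then splits the isoclinic ones by inserting $t^2+1=(t+\qi)(t-\qi)$ on the opposite side, which is the same rebalancing by a real polynomial.

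The existence of the lift itself is only gestured at in your proposal. The paper obtains it from $\mathscr{F}$, $\mathscr{G}$ and the gcd argument of Lemma~\ref{lem:LRrelation}.
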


The proof proceeds through a key intermediate factorization theorem, which is also of independent algebraic interest. In the matrix model, the primitive nontrivial rational factors have degree two; we call such factors quadratic rational curves.

\begin{maintheoremb}[quadratic factorization; see Theorem~\ref{thm:decomp}] 
On $\operatorname{SO}_4(\mathbb{R})$, every rational curve  of degree $2d$ is a product of $d$ quadratic rational curves.
\end{maintheoremb}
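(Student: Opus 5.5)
The plan is to pass from the matrix curve to a pair of quaternion polynomials, factor those, and reassemble. Identify $\mathbb{R}^4$ with $\Hq$. Every element of $\operatorname{SO}_4(\mathbb{R})$ acts as $x\mapsto p\,x\,\Cj{q}$ with $p,q$ unit quaternions, unique up to a common sign. This is Cayley's factorization into left and right isoclinic parts.

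\textbf{Step 1 (lifting).} Let $R(t)$ be a rational curve of degree $2d$. Write it with a common real denominator as $R(t)=M(t)/\mu(t)$, where $M$ is a polynomial matrix and $\mu$ a real polynomial. We want quaternion polynomials $P(t),Q(t)$ with
\[ M(t)x=\lambda(t)\,P(t)\,x\,\Cj{Q(t)}, \qquad \QNorm{P}=\QNorm{Q}, \]
for a suitable real scalar $\lambda(t)$, and with $\deg P=\deg Q=d$ after clearing common real factors. To get these, extract the left and right isoclinic parts $L(t)=(R-\star R\star)/2$-type expressions, i.e.\ the projections of $R$ onto left and right multiplication matrices. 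Concretely, the entries of $P\otimes\Cj{Q}$ are bilinear in the coefficients of $P,Q$. The rank-one structure of the $4\times 4$ associate matrix of $R$ (as in Perez-Gracia--Thomas) then lets us read off $P$ up to a real rational factor from any nonzero row, and $Q$ from any nonzero column.

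\textbf{Step 2 (degree bookkeeping).} After removing real polynomial common factors from $P$ and from $Q$, show that the reduced degree of $R$ is $\deg P+\deg Q$, so $\deg P=\deg Q=d$, using the norm equality. I expect this to be the main obstacle. Real factors can be moved between $P$, $Q$ and $\mu$, and the norms must match exactly. The approach I would try first is to choose the lift so that $\mu=\QNorm{P}=\QNorm{Q}$, and to argue by primitivity (gcd of coefficients) that no cancellation occurs.

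\textbf{Step 3 (factoring).} Factor the norm polynomial $\nu:=\QNorm{P}=\QNorm{Q}$ into real quadratic irreducible factors $\nu=m_1\cdots m_d$. Positive constants can be absorbed; real linear factors cannot occur after the reduction in Step 2, since a real root of $\nu$ would make $P$ vanish there. By the classical factorization theorem for quaternion polynomials (Niven, Gordon--Motzkin), $P=(t-h_1)\cdots(t-h_d)$ with $\QNorm{t-h_i}=m_i$, for any prescribed ordering of the $m_i$. Use the same ordering to write $Q=(t-k_1)\cdots(t-k_d)$ with $\QNorm{t-k_i}=m_i$; this is the pairing of linear factors with equal norms. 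Define
\[ R_i(t)x=\frac{(t-h_i)\,x\,\Cj{(t-k_i)}}{m_i(t)}. \]
Each $R_i$ is a quadratic rational curve in $\operatorname{SO}_4(\mathbb{R})$, since the map has norm one. Because left and right multiplications commute,
\[ R_1\cdots R_d\,x=P\,x\,\Cj{Q}/\nu=R(t)x. \]
If a coefficient case is degenerate, for example when $P$ has a real polynomial factor, handle it by the remark in Step 2.
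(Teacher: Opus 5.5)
\textbf{There is a genuine gap.} It lies in the equal-norm claim in your Steps~1--2. Reading $P,Q$ off the rank-one associate matrix is a legitimate substitute for the paper's operators $\mathscr{F},\mathscr{G}$. Your degree identity is also right: for primitive $P,Q$, the coprimality argument (as in the proof of Lemma~\ref{lem:commu}) gives $\QNorm{P}\,\QNorm{Q}=\mu^2$ up to a positive constant, hence $\deg P+\deg Q=2d$. But nothing forces $\QNorm{P}=\QNorm{Q}$ or $\deg P=\deg Q=d$, because the irreducible quadratic factors of $\mu$ may be split unevenly between $P$ and $Q$.

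For instance, $\gamma(t)x=(t-\qi)^2x/(t^2+1)$ is a degree-$2$ curve in $\operatorname{Rat}(4,I_4)$ with $P=(t-\qi)^2$ primitive of norm $(t^2+1)^2$ and $Q=1$. Likewise, the worked example in Section~\ref{sec:example} has $\deg\mathcal{L}=3$ and $\deg\mathcal{R}=1$ with $d=2$.

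This is not a degenerate case that bookkeeping can repair. Every factor your Step~3 produces has the form $x\mapsto (t-h)x\Cj{(t-k)}/m$ with $\QNorm{t-h}=\QNorm{t-k}=m$, and it fixes a $2$-plane pointwise. Indeed, equal norm polynomials force $(t-h)/\sqrt{m}$ and $(t-k)/\sqrt{m}$ to be unit quaternions with equal real parts, hence conjugate. By contrast, left multiplication by the unit quaternion $(t-\qi)^2/(t^2+1)\neq 1$ fixes no nonzero vector, so this $\gamma$ is not a single factor of your type. If you force equal norms by multiplying by real polynomials (here $Q=t^2+1$), primitivity is lost and Step~3 returns more than $d$ factors (here two planar rotations). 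That gives the planar-rotation count of Theorem~\ref{thm:futherdecomp}, not the $d$-factor statement you are proving.

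\textbf{The missing idea is the paper's three-way pairing.} Take $\mu=q_1\cdots q_d$ with irreducible quadratics and $P,Q$ monic and primitive. The norms can be written as $\QNorm{P}=q_1\cdots q_s\,q_{s+1}^2\cdots q_{s+a}^2$ and $\QNorm{Q}=q_1\cdots q_s\,q_{s+a+1}^2\cdots q_d^2$. Factor $P$ so that its linear factors have norms $q_1,\dots,q_s$ followed by consecutive equal pairs $q_{s+i},q_{s+i}$, and factor $Q$ the same way. Then form three kinds of pairs:
\begin{itemize}
\item the first $s$ left factors with the first $s$ right factors, giving your left--right factors;
\item each consecutive equal-norm pair inside $P$, giving a left isoclinic quadratic curve;
\item each consecutive equal-norm pair inside $Q$, giving a right isoclinic quadratic curve.
\end{itemize}
The commutativity of left and right multiplications lets you reorder the factors into this grouping. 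Primitivity does two jobs here: it lets you prescribe the order of the norm factors, and it guarantees that no consecutive equal-norm pair multiplies to the real polynomial $q_k$. Hence each of the $d$ factors really has degree $2$.
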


This quadratic factorization theorem is the $\operatorname{SO}_4(\mathbb{R})$ analogue of the first stage of motion-polynomial factorization: the curve is decomposed into factors of the smallest degree in the matrix model. Each quadratic factor is then shown to be a product of at most two planar rotation curves.

\subsection{Proof strategy and algorithmic output}

The proof of Theorem~\ref{thm:futherdecomp} has three steps. First, we apply Cayley's factorization to extract right and left isoclinic parts from an arbitrary rational curve on $\operatorname{SO}_4(\mathbb{R})$. Second, we identify these isoclinic polynomial matrices with quaternion polynomials and factor them into linear quaternion factors. Third, we pair linear factors with equal norm polynomials; each pair gives a quadratic rational curve on $\operatorname{SO}_4(\mathbb{R})$, and the three possible types of quadratic factors are then decomposed into one or two planar rotation curves.

This proof is constructive throughout. Algorithm~\ref{alg:decomp} produces the factorization into quadratic factors, and Algorithm~\ref{alg:plane_rotation_decomp} refines it into planar rotations. The examples in Section~\ref{sec:example} are included to make the full pipeline visible: Cayley's factorization, quaternion factorization, pairing of equal-norm factors, and planar-rotation refinement.

\section{Preliminaries}

This section fixes notation and recalls the two algebraic tools used throughout the paper. The first tool is the Cayley factorization of four-dimensional rotations into left and right isoclinic factors. The second is the factorization of quaternion polynomials into linear factors. Together they allow us to turn a factorization problem for rational matrix curves on $\operatorname{SO}_4(\mathbb{R})$ into a factorization problem in $\mathbb{H}[t]$.

\subsection[Rational curves on SO4]{Rational curves on $\operatorname{SO}_4(\mathbb{R})$}\label{subsec:curveSO4}

We view the special orthogonal group $\operatorname{SO}_4(\mathbb{R})$ as the group consisting of $X \in \mathbb{R}^{4 \times 4}$ such that $XX^\top = I_4$ and $\det (X) = 1$.

\begin{definition}\label{def:ratcurve}
A rational curve on $\operatorname{SO}_4(\mathbb{R})$ is a map $\gamma: \mathbb{R}^1 \to \operatorname{SO}_4(\mathbb{R})$ of the form $\gamma(t) = \big( p_{ij}(t)/q(t) \big)_{1 \le i,j \le 4}$ where $q, p_{ij}$ are polynomials in $\mathbb{R}[t]$ such that $q$ has no real root, $\gcd(p_{11}, \dots, p_{44}, q) = 1$ and $\operatorname{deg}(q)\ge \operatorname{deg}(p_{ij})$ for all $1\le i,j \le 4$.
\end{definition}
By definition, a rational curve $\gamma$ on $\operatorname{SO}_4(\mathbb R)$ satisfies
\[
\gamma(t)\gamma(t)^{\top}=I_4,\qquad \det \gamma(t)=1,
\]
for all $t\in \mathbb{R}$. Following \cite{Liye2024rational}, we may define the degree of $\gamma$.

\begin{definition}\label{def:deg}
Let $\gamma(t) = \big( p_{ij}(t)/q(t) \big)_{1 \le i,j \le 4}$ be a rational curve on $\operatorname{SO}_4(\mathbb{R})$. The degree of $ \gamma $ is $ \deg(\gamma) := \deg(q)$, which is always an even non-negative integer.
\end{definition}
For rational curves $\gamma_1, \gamma_2$ on $\operatorname{SO}_4(\mathbb{R})$, it is straightforward to verify that $\deg(\gamma_1 \gamma_2) \le \deg(\gamma_1) + \deg(\gamma_2)$, and the inequality can strictly hold.

The set of all rational curves on $\operatorname{SO}_4(\mathbb{R})$ forms a group under multiplication, with identity $I_4$ and inverse $\gamma^{-1} = \gamma^\top$, denoted by $\operatorname{Rat}(4)$. Since $\operatorname{SO}_4(\mathbb{R})$ is compact, $\gamma(\infty) := \displaystyle \lim_{t \to \infty} \gamma(t)$ is well defined for every $\gamma \in \operatorname{Rat}(4)$. Let
\[
\operatorname{Rat}(4, I_4) := \{ \gamma \in \operatorname{Rat}(4) \mid \gamma(\infty) = I_4\},
\]
which is a subgroup. Every $\gamma \in \operatorname{Rat}(4)$ factors uniquely as $\gamma = A \gamma_0$ with $A = \gamma(\infty) \in \operatorname{SO}_4(\mathbb{R})$ and $\gamma_0 \in \operatorname{Rat}(4, I_4)$. Thus, it suffices to concentrate on curves in $\operatorname{Rat}(4, I_4)$. By definition, $\gamma(t) = \big( p_{ij}(t)/q(t) \big)_{1 \le i,j \le 4}$ in $\operatorname{Rat}(4, I_4)$ is characterized by the following conditions:
\begin{enumerate}
    \renewcommand{\labelenumi}{(\roman{enumi})}
    \item $\gamma(t) \in \operatorname{SO}_4(\mathbb{R})$ for all $t \in \mathbb{R}$.    
    \item $q$ has no real root.
    \item $\gcd(p_{11}, p_{12}\dots p_{44}, q) = 1$.
    \item $q$ and the diagonal elements $p_{11},\dots,p_{44}$ are monic.
    \item $\deg (q) = \deg (p_{ii}) > \deg (p_{ij})$ for all $i \neq j$.
\end{enumerate} 

In what follows, we shall always work with this parametrization. The goal of the subsequent sections is to analyze the structure of $\operatorname{Rat}(4, I_4)$ by decomposing such curves into factors of lower degree.

\subsection{Quaternions and isoclinic matrices}

We recall the algebra of quaternions
$\mathbb{H} = \{p_0 + p_1 \qi + p_2 \qj + p_3 \qk \mid p_0, p_1, p_2, p_3 \in \mathbb{R}\}$, where $\qi, \qj, \qk$ satisfy $\qi^2 = \qj^2 = \qk^2 = \qi\qj\qk = -1$. Given a quaternion $x = x_0 +  x_1 \qi +  x_2 \qj +  x_3 \qk$, we may identify it with the vector $(x_0, x_1, x_2, x_3)^{\top} \in \mathbb{R}^4$. This provides an isomorphism $\mathbb{H} \cong \mathbb{R}^4$ between vector spaces.

For a fixed quaternion $p = p_0 + p_1 \qi + p_2 \qj + p_3 \qk \in \mathbb{H}$, left multiplication defines a linear map 
$L_p \colon \mathbb{H} \to \mathbb{H}$, $x \mapsto p x$. Similarly, for a fixed quaternion $q = q_0 + q_1 \qi + q_2 \qj + q_3 \qk$, 
right multiplication defines $R_q \colon \mathbb{H} \to \mathbb{H}$, 
$x \mapsto x q$. Their representing matrices under the identification $\mathbb{H} \cong \mathbb{R}^4$ are given as follows:
\begin{equation}\label{eq:isoclinic} 
    \begin{split}
    L_p &= p_0 M_0 + p_1 M_1 + p_2 M_2 + p_3 M_3  = \begin{pmatrix}  
p_0 & -p_1 & -p_2 & -p_3\\
p_1 & \phantom{-}p_0 & -p_3 & \phantom{-}p_2\\
p_2 & \phantom{-}p_3 & \phantom{-}p_0 & -p_1 \\
p_3 & -p_2 & \phantom{-}p_1 & \phantom{-}p_0
\end{pmatrix}, \\
R_q &= q_0 M_0 + q_1 N_1 + q_2 N_2 + q_3 N_3 = \begin{pmatrix}  
\phantom{-}q_0 & \phantom{-}q_1 & \phantom{-}q_2 & \phantom{-}q_3\\
-q_1 & \phantom{-}q_0 & -q_3 & \phantom{-}q_2\\
-q_2 & \phantom{-}q_3 & \phantom{-}q_0 & -q_1 \\
-q_3 & -q_2 & \phantom{-}q_1 & \phantom{-}q_0
\end{pmatrix},
\end{split}
\end{equation}

where $M_0 = N_0 = I_4$ is the $4 \times 4$ identity matrix, and
\begin{equation}\label{eq:MN}
\begin{split}
M_1 &= \begin{pmatrix}  
0 & -1 & 0  & 0\\
1 & 0 & 0 & 0\\
0 & 0 & 0 & -1 \\
0 & 0 & 1 & 0
\end{pmatrix}, \quad M_2 = \begin{pmatrix}  
0 & 0 & -1  & 0\\
0 & 0 & 0 & 1\\
1 & 0 & 0 & 0 \\
0 & -1 & 0 & 0
\end{pmatrix}, \quad M_3 = \begin{pmatrix}  
0 & 0 & 0  & -1\\
0 & 0 & -1 & 0\\
0 & 1 & 0 & 0 \\
1 & 0 & 0 & 0
\end{pmatrix}, \\[5pt]
N_1 &= \begin{pmatrix}  
0 & 1 & 0  & 0\\
-1 & 0 & 0 & 0\\
0 & 0 & 0 & -1 \\
0 & 0 & 1 & 0
\end{pmatrix}, \quad N_2 = \begin{pmatrix}  
0 & 0 & 1  & 0\\
0 & 0 & 0 & 1\\
-1 & 0 & 0 & 0 \\
0 & -1 & 0 & 0
\end{pmatrix}, \quad N_3 = \begin{pmatrix}  
0 & 0 & 0  & 1\\
0 & 0 & -1 & 0\\
0 & 1 & 0 & 0 \\
-1 & 0 & 0 & 0
\end{pmatrix}.
\end{split}
\end{equation}
A direct computation yields the multiplication tables of these matrices, as shown in Table \ref{tab:multiplication}.

\begin{table}[h]
\centering
\renewcommand{\arraystretch}{1.2}
\setlength{\tabcolsep}{0.2pt} 
\begin{tabular}{|c|c|c|c|c|}
\hline
\(\;\;\times\;\;\) & \cellcolor{gray!30}\(\;\;\,M_0\;\;\,\) & \cellcolor{gray!30}\(M_1\) & \cellcolor{gray!30}\(M_2\) & \cellcolor{gray!30}\(M_3\) \\
\hline
\cellcolor{gray!30}\(M_0\) & \cellcolor{red!15}\(M_0\) & \cellcolor{blue!15}\(M_1\) & \cellcolor{yellow!15}\(M_2\) & \cellcolor{green!15}\(M_3\) \\
\hline
\cellcolor{gray!30}\(M_1\) & \cellcolor{blue!15}\(M_1\) & \cellcolor{red!15}\(-M_0\) & \cellcolor{green!15}\(M_3\) & \cellcolor{yellow!15}\(-M_2\) \\
\hline
\cellcolor{gray!30}\(M_2\) & \cellcolor{yellow!15}\(M_2\) & \cellcolor{green!15}\(-M_3\) & \cellcolor{red!15}\(-M_0\) & \cellcolor{blue!15}\(M_1\) \\
\hline
\cellcolor{gray!30}\(M_3\) & \cellcolor{green!15}\(M_3\) & \cellcolor{yellow!15}\(M_2\) & \cellcolor{blue!15}\(-M_1\) & \cellcolor{red!15}\(-M_0\) \\
\hline
\end{tabular} \qquad \qquad \quad
\begin{tabular}{|c|c|c|c|c|}
\hline
\(\;\;\times\;\;\) & \cellcolor{gray!30}\(\;\;\,N_0 \;\;\,\) & \cellcolor{gray!30}\(N_1\) & \cellcolor{gray!30}\(N_2\) & \cellcolor{gray!30}\(N_3\) \\
\hline
\cellcolor{gray!30}\(N_0\) & \cellcolor{red!15}\(N_0\) & \cellcolor{blue!15}\(N_1\) & \cellcolor{yellow!15}\(N_2\) & \cellcolor{green!15}\(N_3\) \\
\hline
\cellcolor{gray!30}\(N_1\) & \cellcolor{blue!15}\(N_1\) & \cellcolor{red!15}\(-N_0\) & \cellcolor{green!15}\(N_3\) & \cellcolor{yellow!15}\(-N_2\) \\
\hline
\cellcolor{gray!30}\(N_2\) & \cellcolor{yellow!15}\(N_2\) & \cellcolor{green!15}\(-N_3\) & \cellcolor{red!15}\(-N_0\) & \cellcolor{blue!15}\(N_1\) \\
\hline
\cellcolor{gray!30}\(N_3\) & \cellcolor{green!15}\(N_3\) & \cellcolor{yellow!15}\(N_2\) & \cellcolor{blue!15}\(-N_1\) & \cellcolor{red!15}\(-N_0\) \\
\hline
\end{tabular}
\caption{Multiplication tables of \(M_i\) and \(N_j\).}
\label{tab:multiplication}
\end{table}
From Table~\ref{tab:multiplication}, we observe that the matrices 
$M_i$ and $N_j$ 
satisfy the same multiplication rules as $\qi, \qj, \qk$. Furthermore, a direct computation shows that $M_i N_j = N_j M_i$ for all $1 \le i, j \le 3$, reflecting the fact that $M_i$ and $N_j$ generate two commuting Lie subalgebras of $\mathfrak{so}(4)$. These two subalgebras correspond to the standard factorization $\mathfrak{so}(4) \cong \mathfrak{so}(3) \oplus \mathfrak{so}(3)$, which is induced by the isomorphism $\operatorname{Spin}(4) \cong \operatorname{Spin}(3) \times \operatorname{Spin}(3)$~\cite{hall2013lie}.

\begin{definition}[Isoclinic matrices]
A matrix of the form $L_p$ with $p_0^2 + p_1^2 + p_2^2 + p_3^2 = 1$ 
is called a left isoclinic matrix. 
Similarly, a matrix of the form $R_q$ with $q_0^2 + q_1^2 + q_2^2 + q_3^2 = 1$ 
is called a right isoclinic matrix.
\end{definition}

Isoclinic matrices are orthogonal and have determinant one whenever $p_0^2+p_1^2+p_2^2+p_3^2=1$ or $q_0^2+q_1^2+q_2^2+q_3^2=1$. Hence both left and right isoclinic matrices lie in \(\operatorname{SO}_4(\mathbb{R})\). Geometrically, a left isoclinic matrix corresponds to a rotation with the same angle in two orthogonal planes, while a right isoclinic matrix corresponds to rotations with opposite angles. From the commutativity of $M_i$ and $N_j$, we naturally obtain the following well-known property.

\begin{proposition}[{\cite[Section~2]{Perez17cayley}}]
Left isoclinic matrices commute with right isoclinic matrices. 
\end{proposition}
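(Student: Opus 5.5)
The plan is to reduce the statement to associativity of quaternion multiplication, using the matrix representations in \eqref{eq:isoclinic}. For a left isoclinic matrix $L_p$ and a right isoclinic matrix $R_q$, we must show $L_pR_q = R_qL_p$. By definition, $L_p$ represents the linear map $x\mapsto px$ on $\mathbb{H}\cong\mathbb{R}^4$, and $R_q$ represents $x\mapsto xq$. Composition of linear maps corresponds to multiplication of their representing matrices. Hence $L_pR_q$ represents $x\mapsto p(xq)$ and $R_qL_p$ represents $x\mapsto (px)q$. Since quaternion multiplication is associative, $p(xq)=(px)q$ for every $x\in\mathbb{H}$. So the two linear maps agree, and therefore so do their matrices. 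The unit-norm hypothesis plays no role: the identity $L_pR_q=R_qL_p$ holds for arbitrary $p,q\in\mathbb{H}$.

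An alternative, more in the spirit of the paragraph preceding the statement, uses linearity. Expand
\[
L_p=\sum_{i=0}^3 p_iM_i,\qquad R_q=\sum_{j=0}^3 q_jN_j ,
\]
so that $L_pR_q=\sum_{i,j}p_iq_jM_iN_j$. It then suffices to check $M_iN_j=N_jM_i$ for all $0\le i,j\le 3$. The cases $i=0$ or $j=0$ are trivial because $M_0=N_0=I_4$. The remaining nine cases are the commutation relations already noted after Table~\ref{tab:multiplication}, and each can be confirmed by a direct $4\times 4$ multiplication using \eqref{eq:MN}.

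The only potential obstacle is bookkeeping: one must make sure that the matrices written in \eqref{eq:isoclinic} genuinely represent left and right multiplication under the chosen identification $x\mapsto(x_0,x_1,x_2,x_3)^\top$. For instance, $R_q$ must be the matrix of $x\mapsto xq$, not of $x\mapsto x\bar q$. I would verify this on basis vectors. For example, $L_p e_1$ should equal the coordinate vector of $p\cdot 1=p$, which is the first column $(p_0,p_1,p_2,p_3)^\top$, as displayed. Similarly, $R_q e_2$ should be the coordinate vector of $\qi q$. Once this identification is confirmed, the associativity argument finishes the proof immediately. Otherwise, the nine explicit products $M_iN_j$ settle it.
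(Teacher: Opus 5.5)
Your alternative argument (expand $L_p=\sum_i p_iM_i$ and $R_q=\sum_j q_jN_j$, then use $M_iN_j=N_jM_i$) is exactly the paper's route. The paper obtains the proposition directly from the commutation of the $M_i$ and $N_j$, which it asserts by direct computation. Your primary argument via associativity is a genuinely different, more conceptual route, and it is correct. However, your bookkeeping paragraph gets the identification wrong.

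Despite the sentence preceding \eqref{eq:isoclinic}, the displayed matrix $R_q$ is \emph{not} the matrix of $x\mapsto xq$. Its first column $(q_0,-q_1,-q_2,-q_3)^\top$ is the coordinate vector of $\Cj{q}$. Its second column $(q_1,q_0,q_3,-q_2)^\top$ is the coordinate vector of $\qi\Cj{q}$, not of $\qi q$. So $R_q$ represents $x\mapsto x\Cj{q}$. This is why Table~\ref{tab:multiplication} has $N_1N_2=N_3$ rather than $-N_3$, and why $\varphi_R$ in Lemma~\ref{lem:keep_multip} is multiplicative rather than anti-multiplicative. The basis-vector check you propose would therefore fail.

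Your requirement that $R_q$ ``must'' be $x\mapsto xq$ rather than $x\mapsto x\Cj{q}$ is also unnecessary. Associativity gives $p(x\Cj{q})=(px)\Cj{q}$ just as well, so any matrix of a right multiplication commutes with any matrix of a left multiplication. With that correction, your argument explains \emph{why} the nine products $M_iN_j$ commute: $M_i$ is left multiplication by a basis unit, and $N_j$ is right multiplication by its conjugate. It needs no computation and shows the unit-norm hypothesis is irrelevant. The paper's entrywise check, by contrast, is a self-contained finite verification that never needs to interpret the matrices. That makes it immune to exactly the labeling issue you ran into.
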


\subsection{Cayley's factorization of 4D rotations}\label{subsec:Cayley}
The following is a classical result of Cayley.
\begin{theorem}[Cayley's factorization \cite{Cayley55,Weiner05quaternion}]\label{thm:Cayley}
For any $Q \in \operatorname{SO}_4(\mathbb{R})$ there exist a left isoclinic matrix $L$ and a right isoclinic matrix $R$, such that $Q = L R$.
\end{theorem}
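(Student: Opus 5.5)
The plan is to show that the map
\[
\Phi\colon \mathbb{H}_1\times\mathbb{H}_1\to \operatorname{SO}_4(\mathbb{R}),\qquad (p,q)\mapsto L_pR_q,\quad\text{i.e.}\quad x\mapsto p\,x\,q,
\]
is surjective. Then every $Q\in\operatorname{SO}_4(\mathbb{R})$ equals $L_pR_q$, with $L_p$ left isoclinic and $R_q$ right isoclinic.

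\textbf{Step 1: $\Phi$ is a homomorphism into $\operatorname{SO}_4(\mathbb{R})$.} Since $M_iN_j=N_jM_i$ by the Proposition above, we have $L_pR_q=R_qL_p$, so $\Phi$ is multiplicative. For unit $p,q$ we get $|pxq|=|x|$, and the image is connected and contains $I_4=\Phi(1,1)$. Hence $\Phi$ lands in $\operatorname{SO}_4(\mathbb{R})$, as already noted after the definition of isoclinic matrices.

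\textbf{Step 2: reduce to the stabilizer of $1$.} Let $Q\in\operatorname{SO}_4(\mathbb{R})$ and set $a:=Q(1)\in\mathbb{H}$, viewing $1=(1,0,0,0)^\top$. Then $|a|=1$. Put $Q':=L_{\bar a}Q$. This matrix lies in $\operatorname{SO}_4(\mathbb{R})$ and satisfies $Q'(1)=\bar a a=1$. So $Q'$ fixes $1$ and preserves its orthogonal complement $\operatorname{Im}\mathbb{H}\cong\mathbb{R}^3$. Since $\det Q'=1$, its restriction to $\operatorname{Im}\mathbb{H}$ is an element of $\operatorname{SO}_3(\mathbb{R})$.

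\textbf{Step 3: realize $\operatorname{SO}_3$ by conjugation.} We need the classical fact that every element of $\operatorname{SO}_3(\mathbb{R})$ has the form $x\mapsto u x\bar u$ for some unit quaternion $u$. To prove it, note that a rotation of $\operatorname{Im}\mathbb{H}$ by angle $\theta$ about a unit axis $n\in\operatorname{Im}\mathbb{H}$ is given by $u=\cos(\theta/2)+\sin(\theta/2)\,n$. This follows from a direct computation: $u$ commutes with $n$, and for a unit $m\perp n$ one has $nm=-mn$, which gives $umu^{-1}=\cos\theta\, m+\sin\theta\, nm$. Also, every element of $\operatorname{SO}_3$ is a rotation about some axis, by Euler's theorem. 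The map $x\mapsto ux\bar u$ also fixes $1$. Therefore $Q'=L_uR_{\bar u}$ on all of $\mathbb{H}$.

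\textbf{Step 4: conclude.} From Step 2, $Q=L_aQ'$, and substituting Step 3 gives
\[
Q=L_aL_uR_{\bar u}=L_{au}R_{\bar u}.
\]
Take $L=L_{au}$ and $R=R_{\bar u}$. Both $au$ and $\bar u$ are unit quaternions, so $L$ is left isoclinic and $R$ is right isoclinic.

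The main obstacle is Step 3, the surjectivity of $\mathbb{H}_1\to\operatorname{SO}_3(\mathbb{R})$. I would handle it through the axis–angle computation just described rather than through a dimension or connectedness argument, since the explicit formula makes it self-contained.
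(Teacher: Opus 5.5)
Your proof is correct, but it takes a different route from the paper. The paper does not prove Theorem~\ref{thm:Cayley} itself. It cites the classical result and recalls the explicit extraction of \cite{Perez17cayley}. There the operators $\mathscr{F}_i$ are linear projections with $\mathscr{F}_i(M_kN_j)=\delta_{ik}N_j$, hence $\mathscr{F}_i(L_pR_q)=p_iR_q$. For an index with $p_i\neq 0$ this gives $R=\mathscr{F}_i(Q)/\det(\mathscr{F}_i(Q))^{1/4}=\pm R_q$ and $L=QR^\top$.

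You instead prove existence structurally. Left multiplication by $\bar a$ returns $Q(1)$ to $1$, and the stabilizer of $1$ is handled by the surjectivity of $u\mapsto(x\mapsto ux\bar u)$ onto $\operatorname{SO}_3(\mathbb{R})$, via Euler's theorem and the half-angle formula. Your argument gives a self-contained reason why the factorization exists. That existence is exactly what the extraction formula presupposes, since its correctness is checked through $\mathscr{F}_i(L_pR_q)=p_iR_q$ for some existing factorization. It is also what the proof of Lemma~\ref{lem:leftfunctor} uses pointwise at each $t_0$.

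The paper's formula, in turn, buys computability. It is linear in the entries of $Q$ up to one scalar normalization and needs no axis, angle or half-angle. It can therefore be applied entrywise to a rational curve $\gamma(t)$ to produce the polynomial matrices $F$ and $G$ of Lemmas~\ref{lem:rightfunctor}--\ref{lem:LRrelation}. Your construction does not directly yield such polynomial data.

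A minor notational point: the matrix $R_q$ displayed in the paper has first column $\bar q$, so it represents $x\mapsto x\bar q$, not $x\mapsto xq$ as the paper's text says. With the paper's matrices, your Step~3 should read $Q'=L_uR_u$, and Step~4 should read $Q=L_{au}R_u$. The multiplicativity asserted in Step~1 holds for those matrices. However, the map $(p,q)\mapsto(x\mapsto pxq)$ you wrote down is an anti-homomorphism in the second factor. None of this affects the conclusion, since $u\mapsto\bar u$ preserves $\mathbb{H}_1$ and multiplicativity is never used.
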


We briefly recall the following explicit algorithm \cite{Perez17cayley} for this factorization. For each $0 \le i \le 3$, we define a linear operator $\mathscr{F}_i: \mathbb{R}^{4 \times 4} \to \mathbb{R}^{4 \times 4}$ as follows:
\begin{equation}\label{eq:Fi}
 \begin{split}
\mathscr{F}_0(X) &:= -\frac{1}{4} \left( -X + M_1 X M_1 + M_2 X M_2 + M_3 X M_3 \right), \\
\mathscr{F}_1(X) &:= -\frac{1}{4} \left( X M_1 + M_1 X + M_3 X M_2 - M_2 X M_3 \right), \\
\mathscr{F}_2(X) &:= -\frac{1}{4} \left( X M_2 + M_2 X + M_1 X M_3 - M_3 X M_1 \right), \\
\mathscr{F}_3(X) &:= -\frac{1}{4} \left( X M_3 + M_3 X + M_2 X M_1 - M_1 X M_2 \right),
\end{split}
\end{equation}
where $M_1, M_2, M_3$ are defined as \eqref{eq:MN}. Then for any $Q \in \operatorname{SO}_4(\mathbb{R})$, there is some $0 \le i \le 3$ such that $\det (\mathscr{F}_i(Q)) \ne 0$. Moreover, the factors $L$ and $R$ are given by
\[
R = \frac{\mathscr{F}_i(Q)}{\operatorname{det}\bigl(\mathscr{F}_i(Q)\bigr)^{1/4}},\quad L = Q R^\top = \frac{Q \mathscr{F}_i(Q)^\top}{\operatorname{det}\bigl(\mathscr{F}_i(Q)\bigr)^{1/4}}.
\]

\subsection{Quaternion polynomials}
For a given quaternion $q = q_0 + q_1\qi + q_2\qj + q_3\qk$, we denote the \emph{conjugate} of $q$ by $\Cj{q} := q_0 - q_1\qi - q_2\qj - q_3\qk$, and its \emph{norm} by $\QNorm{q} := q\Cj{q} = q_0^2 + q_1^2 + q_2^2 + q_3^2 \in \R$. We say a quaternion $q$ is purely imaginary if $q_0=0$, and in this case we have $\Cj{q}=-q$.

We now consider the algebra $\Hq[t]$ of polynomials in the indeterminate $t$ with coefficients in $\Hq$. Multiplication is defined by requiring that the indeterminate $t$ commutes with all quaternion coefficients; equivalently, $t$ is central over $\mathbb{H}$. This is only one possibility among many~\cite{Ore33theory}, but it is appropriate in kinematics because we think of $t$ as a real motion parameter and $\R$ is the center of $\mathbb{H}$. For $\mathcal{M} \in \mathbb{H}[t]$, we denote by $\Cj{\mathcal{M}}$ the polynomial obtained by conjugating coefficients of $\mathcal{M}$. The \emph{norm polynomial} of $\mathcal{M}$ is $\QNorm{\mathcal{M}} := \mathcal{M}\Cj{\mathcal{M}} \in \R[t]$.
\begin{definition}\label{def-primitive}
  A quaternion polynomial $\mathcal{M}=q_0 + q_1\qi + q_2\qj + q_3\qk 
  \in \Hq[t]$ with $q_0, q_1, q_2, q_3 \in \mathbb{R}[t]$ is called primitive if $\gcd(q_0, q_1,  q_2,  q_3)=1$. If $\deg (q_0) = \cdots = \deg (q_3) = 1$, then we say that $\mathcal{M}$ is a linear quaternion polynomial.
\end{definition} 

Every primitive monic quaternion polynomial admits a factorization into the product of linear factors.
\begin{theorem}[{\cite[Theorem 1]{Hegedus13}}] 
  \label{thm:alg_galg} 
  For a primitive monic quaternion polynomial $\mathcal{M}\in \Hq[t]$ of degree~$n$, there exist monic linear quaternion polynomials $\mathcal{L}_1,\dots,\mathcal{L}_n$ such that
    $\mathcal{M}=\mathcal{L}_1 \cdots \mathcal{L}_n$.
\end{theorem}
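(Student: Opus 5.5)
The statement to prove is Theorem~\ref{thm:alg_galg}: a primitive monic $\mathcal{M}\in\Hq[t]$ of degree $n$ is a product of $n$ monic linear factors. I will argue by induction on $n$, splitting off one linear right factor at a time. The right factor will come from a quadratic real irreducible factor of the norm polynomial.

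\textbf{Step 1: the norm polynomial.} Since $\mathcal{M}$ is monic of degree $n$, the norm $\QNorm{\mathcal{M}}=\mathcal{M}\Cj{\mathcal{M}}$ is a real monic polynomial of degree $2n$. It is nonnegative on $\R$, being a sum of four squares of the real components.

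\textbf{Step 2: no real roots.} I claim $\QNorm{\mathcal{M}}$ has no real root. A real root $t_0$ would force all four components $q_0,\dots,q_3$ to vanish at $t_0$. Then $t-t_0$ divides their gcd, contradicting primitivity. Hence $\QNorm{\mathcal{M}}$ factors over $\R$ into $n$ monic quadratics $M_i$ that are irreducible over $\R$.

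\textbf{Step 3: split off a right factor.} Choose one such quadratic $M$ and divide with remainder on the right: $\mathcal{M}=\mathcal{Q}M+\mathcal{R}$ with $\deg\mathcal{R}\le 1$. This works because $M$ is real, hence central, and monic. Then $\QNorm{\mathcal{R}}=(\mathcal{M}-\mathcal{Q}M)(\Cj{\mathcal{M}}-\Cj{\mathcal{Q}}M)$ is divisible by $M$, since $M\mid\QNorm{\mathcal{M}}$ and $M$ is real.

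Write $\mathcal{R}=r_1t+r_0$. If $r_1\neq0$, set $h:=-r_1^{-1}r_0$, so $\mathcal{R}=r_1(t-h)$. Then $\QNorm{\mathcal{R}}=\QNorm{r_1}\,\QNorm{t-h}$ is a real monic quadratic up to the scalar $\QNorm{r_1}$, and $M$ divides it, so $M=\QNorm{t-h}=(t-h)(t-\Cj{h})$. Consequently
\[
\mathcal{M}=\mathcal{Q}(t-\Cj h)(t-h)+r_1(t-h)=\bigl(\mathcal{Q}(t-\Cj h)+r_1\bigr)(t-h).
\]
This gives a right linear factor $t-h$. The left factor $\mathcal{M}'=\mathcal{Q}(t-\Cj h)+r_1$ is monic of degree $n-1$, with $\QNorm{\mathcal{M}'}=\QNorm{\mathcal{M}}/M$.

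\textbf{Step 4: the case $r_1=0$ (main obstacle).} The remaining work is to rule out $r_1=0$. Suppose $\mathcal{R}=r_0$ is constant. Then $M\mid\QNorm{r_0}$, a real constant, so $r_0=0$. This gives $M\mid\mathcal{M}$, i.e.\ every component $q_i$ is divisible by $M$, contradicting primitivity. So $r_1\neq0$.

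\textbf{Step 5: induction.} It remains to check that $\mathcal{M}'$ is primitive. If a real irreducible $p$ divided all components of $\mathcal{M}'$, it would divide those of $\mathcal{M}=\mathcal{M}'(t-h)$, since $p$ is central. Induction then finishes the proof, and the base case $n=0$ (or $n=1$) is trivial.

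Finally, the linear factors have all four components of degree exactly one only in the sense of monic $t-h$. The paper's definition of linear should be read as "degree one". At worst this needs that harmless reinterpretation.
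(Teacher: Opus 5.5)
Your proof is correct and follows essentially the same route as Algorithm~\ref{alg:quaternion}, the division-by-a-quadratic-norm-factor method of \cite{Hegedus13}, which the paper cites rather than proves: divide by a quadratic factor $\rho$ of $\QNorm{\mathcal{M}}$, read off $h$ from the linear remainder, and recurse on $\mathcal{M}(t-\Cj{h})/\rho$. You also supply the details the paper leaves to the reference, namely that primitivity forces the remainder's leading coefficient to be nonzero and that primitivity passes to the cofactor; and you are right that ``linear'' must be read as ``degree one'', since no monic polynomial has all four components of degree one as Definition~\ref{def-primitive} literally requires.
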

By \cite{Hegedus13}, there exists a Euclidean-type algorithm for the factorization in Theorem~\ref{thm:alg_galg}. For convenience, we reproduce it as Algorithm~\ref{alg:quaternion}.
\begin{algorithm}[H]
\caption{\texttt{GFactor}: Factorization algorithm for a primitive monic quaternion polynomial}
\label{alg:quaternion}
\begin{algorithmic}[1]
\Require A primitive monic quaternion polynomial $\mathcal{M}$ of degree $n$.
\Ensure A list $[\mathcal{L}_1,\ldots,\mathcal{L}_{n}]$ of monic linear quaternion polynomials such that $\mathcal{M}=\mathcal{L}_1 \cdots \mathcal{L}_n$.

\If{$\deg (\mathcal{M}) = 0$}
      \State \Return $[\,]$ \Comment{Empty list.}
\EndIf

\State $\rho(t) \adef$ a monic quadratic real factor of the norm polynomial $\mathcal{M}\Cj{\mathcal{M}} \in \R[t]$.
    \label{galg:pick-quadratic-factor}
    \State Compute $h$ from the remainder of $\mathcal{M}=\mathcal{Q}\rho+\mathcal{R}=\mathcal{Q}(t-\Cj{h})(t-h)+r_1(t-h)$ \Comment{$\mathcal{M}(h)=\rho(h)=0$}
    \label{galg:h}
    \State $\mathcal{M} \adef \frac{\mathcal{M}(t-\Cj{h})}{\rho(t)}$
    \State \Return $\mathtt{GFactor}(\mathcal{M}) \mathbin{\Vert} [t-h]$ \Comment{$\Vert$ denotes list concatenation.}
\end{algorithmic}
\end{algorithm}

\section{Factorization into quadratic rational curves}\label{sec:quadratic-rational-decomp}

\subsection{Isoclinic rational curves}

In this section, we introduce two distinguished classes of rational curves in $\operatorname{SO}_4(\mathbb R)$, namely left-isoclinic and right-isoclinic rational curves. We establish a one-to-one correspondence between these curves and quaternion polynomials. This correspondence reduces the decomposition problem for rational curves in $\operatorname{SO}_4(\mathbb R)$ to the factorization problem for quaternion polynomials. To this end, we first introduce the notion of polynomial matrices of isoclinic form.

\begin{definition}\label{def:isomat}
Let $\operatorname{M}_4(\mathbb{R}[t])$ be the ring of $4 \times 4$ matrices over $\mathbb{R}[t]$. An element $M \in \operatorname{M}_4(\mathbb{R}[t])$ is called a polynomial matrix. We say that $M$ is of left isoclinic form if there exist $\ell_0, \ell_1, \ell_2, \ell_3 \in \mathbb{R}[t]$ such that  
\[
M = \begin{pmatrix}  
\ell_0 & -\ell_1 & -\ell_2 & -\ell_3\\
\ell_1 & \phantom{-}\ell_0 & -\ell_3 & \phantom{-}\ell_2\\
\ell_2 & \phantom{-}\ell_3 & \phantom{-}\ell_0 & -\ell_1 \\
\ell_3 & -\ell_2 & \phantom{-}\ell_1 & \phantom{-}\ell_0
\end{pmatrix} \eqqcolon [\ell_0, \ell_1, \ell_2, \ell_3]_L. 
\]
The set consisting of all polynomial matrices of left isoclinic form is denoted by $\operatorname{Q}_{L}(\mathbb{R}[t])$. 

Similarly, we say that $M$ is of right isoclinic form if there exist $r_0, r_1, r_2, r_3 \in \mathbb{R}[t]$ such that  
\[
M = \begin{pmatrix}  
\phantom{-}r_0 & \phantom{-}r_1 & \phantom{-}r_2 & \phantom{-}r_3\\
-r_1 & \phantom{-}r_0& -r_3 & \phantom{-}r_2\\
-r_2 & \phantom{-}r_3 & \phantom{-}r_0 & -r_1\\
-r_3 & -r_2 & \phantom{-}r_1 & \phantom{-}r_0
\end{pmatrix} \eqqcolon [r_0, r_1, r_2, r_3]_R,
\]
and $\operatorname{Q}_{R}(\mathbb{R}[t])$ denotes the set of all polynomial matrices of right isoclinic form.
\end{definition}

With this notation, we now define the left and right isoclinic rational curves. We recall from Definition~\ref{def:ratcurve} that a rational curve $\gamma$ on $\operatorname{SO}_4(\mathbb{R})$ can be written as $\gamma = P/q$ for some $P \in \operatorname{M}_4(\mathbb{R}[t])$ and $q \in \mathbb{R}[t]$.

\begin{definition}
A rational curve $\gamma = P/q$ on $\operatorname{SO}_4(\mathbb{R})$ is left (resp. right) isoclinic if $P$ is of left (resp. right) isoclinic form $[\ell_0,\ell_1,\ell_2,\ell_3]_L$ (resp. $[r_0,r_1,r_2,r_3]_R$) such that $\ell_0^2 + \cdots + \ell_3^2 = q^2$ (resp. $r_0^2 + \cdots + r_3^2 = q^2$).
\end{definition}

Let $\alpha = A/q_1$ (resp. $\beta = B/q_2$) be a left (resp. right) isoclinic rational curve on $\operatorname{SO}_4(\mathbb{R})$ with $A = [\ell_0, \ell_1, \ell_2, \ell_3]_L$ and $B = [r_0, r_1, r_2, r_3]_R$ for some $\ell_i, r_i \in 
\mathbb{R}[t]$, $0 \le i \le 3$. Then $A$, $B$ can be written as
\begin{equation}\label{eq:AB}
A = \ell_0 M_0 + \ell_1 M_1 + \ell_2 M_2 + \ell_3 M_3,\qquad B =  r_0 N_0 + r_1 N_1 + r_2 N_2 + r_3 N_3. 
\end{equation}
where $M_i, N_i$ are defined as \eqref{eq:MN}, $0 \le i, j \le 3$. The commutativity of $M_i$ and $N_j$ yields the following lemma.

\begin{lemma}\label{lem:commu}
Let $\alpha$ (resp. $\beta$) be a left (resp. right) isoclinic rational curve on $\operatorname{SO}_4(\mathbb{R})$. If $\deg (\alpha) = 2d_1$ and $\deg (\beta) = 2d_2$, then $\alpha\beta = \beta\alpha$ and $\deg (\alpha \beta) = 2(d_1+d_2)$.
\end{lemma}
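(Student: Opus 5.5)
Commutativity follows from the commutativity of the generators. Writing $\alpha = A/q_1$ and $\beta = B/q_2$ with $A,B$ as in \eqref{eq:AB}, we have $AB = \sum_{i,j}\ell_i r_j M_iN_j$ and $BA = \sum_{i,j}\ell_i r_j N_jM_i$. Since $M_0=N_0=I_4$ and $M_iN_j=N_jM_i$ for $1\le i,j\le 3$, the two sums agree term by term. Dividing by the scalar $q_1q_2$ gives $\alpha\beta=\beta\alpha$.

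For the degree, we first record two facts. We have $A^\top A = (\ell_0^2+\cdots+\ell_3^2) I_4 = q_1^2 I_4$, since $A$ represents left multiplication by the quaternion polynomial $\ell_0+\ell_1\qi+\ell_2\qj+\ell_3\qk$. The same holds for $B$ with $q_2$. The candidate representation of the product is $\alpha\beta = AB/(q_1q_2)$, with $\deg(q_1q_2)=2d_1+2d_2$. It therefore suffices to show that $\gcd(\text{entries of }AB,\, q_1q_2)=1$. The degree conditions of Definition~\ref{def:ratcurve} are then automatic, since $\deg \ell_i\le \deg q_1$ and $\deg r_j\le \deg q_2$.

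The core computation is an explicit formula for the entries of $AB$. Using the quaternion model, $AB$ is the matrix of $x\mapsto \ell x r$ with $\ell=\sum \ell_i e_i$ and $r = \sum r_j e_j$. Its $(k,m)$ entry is, up to sign, a bilinear expression $\ell_i r_j$ summed over specific index pairs. In fact the 16 entries of $AB$ span, over $\mathbb{R}$, the same space as the 16 products $\ell_i r_j$. This is because the map $\mathbb{H}\otimes\mathbb{H}\to \operatorname{End}(\mathbb{R}^4)$, $p\otimes q\mapsto L_pR_q$, is an isomorphism $\mathbb{R}^{16}\to\mathbb{R}^{16}$; the matrices $M_iN_j$ are linearly independent, as one checks by taking traces of products. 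Hence any polynomial dividing every entry of $AB$ divides every product $\ell_i r_j$.

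Suppose an irreducible $\pi\in\mathbb{R}[t]$ divides $q_1q_2$ and all entries of $AB$, so that $\pi\mid \ell_ir_j$ for all $i,j$. By primitivity of the representation of $\alpha$, namely $\gcd(\text{entries of }A, q_1)=\gcd(\ell_0,\dots,\ell_3,q_1)=1$, some $\ell_i$ is not divisible by $\pi$ whenever $\pi\mid q_1$. Similarly, if $\pi \mid q_2$, some $r_j$ is not divisible by $\pi$. Since $\pi$ divides $q_1$ or $q_2$, say $\pi\mid q_1$, we claim $\pi$ cannot divide all $r_j$. Indeed, if it did, then $\pi^2\mid r_0^2+\cdots+r_3^2=q_2^2$, so $\pi\mid q_2$, contradicting the gcd condition for $\beta$. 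Thus we may choose $i,j$ with $\pi\nmid \ell_i$ and $\pi\nmid r_j$, and primality of $\pi$ gives $\pi\nmid \ell_ir_j$, a contradiction. The case $\pi\mid q_2$ is symmetric.

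The main obstacle I expect is justifying cleanly that the entries of $AB$ generate the same ideal as the products $\ell_ir_j$. I would first try the linear-independence argument for $\{M_iN_j\}$ via the trace form $\operatorname{tr}\big((M_iN_j)^\top M_kN_m\big)=4\delta_{ik}\delta_{jm}$, which follows from the multiplication table and the fact that $\operatorname{tr}(M_aN_b)=0$ unless $a=b=0$. Should this trace check prove messy, the fallback is to write out the entries of $AB$ directly.
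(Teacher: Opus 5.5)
Your proof is correct and follows the paper's argument: commutativity from $M_iN_j=N_jM_i$, then writing each $\ell_ir_j$ as a real linear combination of the entries of $AB$ and using primitivity of $(\ell_i)$ and $(r_j)$. The differences are in the justifications. Your Frobenius-orthogonality check $\operatorname{tr}\big((M_iN_j)^\top M_kN_m\big)=4\delta_{ik}\delta_{jm}$ replaces the paper's explicit $4\times 4$ matrix for the linear-combination step. Your $\pi^2\mid q_2^2$ argument supplies the reason for $\gcd(r_0,\dots,r_3)=1$, which the paper only asserts.
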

\begin{proof}
    Let $\alpha = A/q_1$, $\beta = B/q_2$ where $\deg(q_1) = 2d_1$, $\deg(q_2) = 2d_2$ and $A,B$ are parametrized as in \eqref{eq:AB}. Then a direct calculation implies 
    \[
    \alpha \beta = \frac{1}{q_1 q_2} \sum_{i, j=0}^3 \ell_i r_jM_iN_j = \frac{1}{q_1 q_2} \sum_{i, j=0}^3 r_j \ell_i N_jM_i = \beta \alpha.
    \]

    Let $P \coloneqq AB = \bigl( p_{mn} \bigr)_{1 \le m,n \le 4} \in \operatorname{M}_4(\mathbb{R}[t])$.
    To prove $\deg (\alpha \beta) = 2(d_1 + d_2)$, it suffices to show that $\gcd (p_{11}, p_{12}\dots p_{44}) = 1$. To this end, we write $P$ explicily as 
    \[
    P = \begin{pmatrix}
        u_{00}-u_{11}-u_{22}-u_{33} & -u_{01}-u_{10}+u_{23}-u_{32} & -u_{02}-u_{13}-u_{20}+u_{31} & -u_{03}+u_{12}-u_{21}-u_{30} \\
        u_{01}+u_{10}+u_{23}-u_{32} & u_{00}-u_{11}+u_{22}+u_{33} & u_{03}-u_{12}-u_{21}-u_{30} & -u_{02}-u_{13}+u_{20}-u_{31} \\
        u_{02}-u_{13}+u_{20}+u_{31} & -u_{03}-u_{12}-u_{21}+u_{30} & u_{00}+u_{11}-u_{22}+u_{33} & u_{01}-u_{10}-u_{23}-u_{32} \\
        u_{03}+u_{12}-u_{21}+u_{30} & u_{02}-u_{13}-u_{20}-u_{31} & -u_{01}+u_{10}-u_{23}-u_{32} & u_{00}+u_{11}+u_{22}-u_{33}
    \end{pmatrix},
    \]
    where $u_{ij} \coloneqq l_ir_j$, $0 \le i,j \le 3$. We observe that each $u_{ij}$ is a linear combination of $p_{mn}$'s. In particular, we have 
    \begin{equation}\label{eq:lrmultip_prop}
        \begin{aligned}
            &p_{11} + p_{22} + p_{33} + p_{44} = 4 u_{00} =4 \,l_0 r_0, \qquad p_{33} + p_{44} - p_{11} + p_{22} = 4 u_{11} = 4\, l_1r_1,\\
            &p_{22} + p_{44} - p_{11} - p_{33} = 4 u_{22} = 4 \,l_2r_2,\qquad p_{22} + p_{33} - p_{11} - p_{44} = 4 u_{33} = 4 \,l_3r_3.
        \end{aligned}
    \end{equation} 
    Denote $c \coloneqq \gcd(p_{11}, p_{12}\dots p_{44})$. Then we must have $c \mid l_i r_j$ for all \(0 \le i, j \le 3\). Since $\alpha$ and $\beta$ are rational curves, we deduce that $\gcd(\ell_0, \ell_1, \ell_2, \ell_3) = \gcd(r_0, r_1, r_2, r_3) = 1$. If $c \ne 1$ and $\xi$ is an irreducible real factor of $c$, then there exist $0 \le i_0, j_0 \le 3$ such that \(\xi \nmid \ell_{i_0}\) and \(\xi \nmid r_{j_0}\).    
    This implies \(\xi \nmid l_{i_0} r_{j_0}\), contradicting the fact that \(c \mid l_{i_0} r_{j_0}\).
\end{proof}

Next, we consider two $\mathbb{R}$-linear maps
\begin{equation}\label{phi_LR}
    \begin{split}
      &\varphi_L : \operatorname{Q}_{L}(\mathbb{R}[t]) \to \mathbb{H}[t],\quad \varphi_L(M) = \ell_0 + \ell_1\qi + \ell_2\qj + \ell_3\qk,  \\
      &\varphi_R : \operatorname{Q}_{R}(\mathbb{R}[t]) \to \mathbb{H}[t],\quad \varphi_R(N) =  r_0 + r_1\qi + r_2\qj + r_3\qk, 
    \end{split}
\end{equation}
where $M = [\ell_0, \ell_1, \ell_2, \ell_3]_L \in \operatorname{Q}_{L}(\mathbb{R}[t])$ and $N = [r_0, r_1, r_2, r_3]_R \in \operatorname{Q}_{R}(\mathbb{R}[t])$. The following lemma establishes an equivalence between isoclinic rational curves and quaternion polynomials.

\begin{lemma}\label{lem:keep_multip}
The maps $\varphi_L$ and $\varphi_R$ defined in~\eqref{phi_LR} are isomorphisms of $\mathbb{R}$-algebras.
\end{lemma}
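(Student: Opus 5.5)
The plan is to prove the statement for $\varphi_L$ and then repeat the argument verbatim for $\varphi_R$. The only inputs are the decomposition \eqref{eq:AB} and the multiplication tables in Table~\ref{tab:multiplication}.

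\emph{Step 1: linearity and bijectivity.} By \eqref{eq:AB}, every $M\in\operatorname{Q}_L(\mathbb{R}[t])$ can be written as $M=\sum_{i=0}^3 \ell_i M_i$ with $\ell_i\in\mathbb{R}[t]$, and $\varphi_L$ sends this to $\sum_i \ell_i e_i$, where $e_0=1$, $e_1=\qi$, $e_2=\qj$, $e_3=\qk$. The coefficients $\ell_i$ can be read off from the first column of $[\ell_0,\ell_1,\ell_2,\ell_3]_L$. Hence the representation is unique, which means $M_0,\dots,M_3$ are $\mathbb{R}[t]$-linearly independent. It follows that $\varphi_L$ is well defined, $\mathbb{R}$-linear (indeed $\mathbb{R}[t]$-linear), injective and surjective. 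Its inverse is $\sum_i \ell_i e_i\mapsto \sum_i \ell_i M_i$. Moreover $\varphi_L(I_4)=1$.

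\emph{Step 2: closure and multiplicativity.} For $M=\sum_i \ell_i M_i$ and $M'=\sum_j \ell'_j M_j$, the polynomial entries $\ell_i,\ell'_j$ are scalars that commute with the constant matrices $M_j$. Therefore
\[
MM'=\sum_{i,j}\ell_i\ell'_j\,M_iM_j .
\]
By Table~\ref{tab:multiplication}, each product $M_iM_j$ equals $\pm M_k$ with exactly the same sign and index as in the product $e_ie_j=\pm e_k$ in $\mathbb{H}$. Consequently $MM'$ lies again in $\operatorname{Q}_L(\mathbb{R}[t])$, so $\operatorname{Q}_L(\mathbb{R}[t])$ is a subalgebra of $\operatorname{M}_4(\mathbb{R}[t])$. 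Since $t$ is central in $\mathbb{H}[t]$, the same expansion holds on the quaternion side:
\[
\varphi_L(M)\varphi_L(M')=\sum_{i,j}\ell_i\ell'_j\,e_ie_j .
\]
Comparing the two expansions term by term gives $\varphi_L(MM')=\varphi_L(M)\varphi_L(M')$.

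\emph{Step 3: the right-isoclinic map.} For $\varphi_R$ the argument is identical, with $N_j$ in place of $M_j$, using the second table in Table~\ref{tab:multiplication}.

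The only point that needs care is exactly here. Genuine right multiplication $x\mapsto xq$ is an anti-homomorphism, since $R_qR_{q'}=R_{q'q}$. So one must check that the specific matrices $N_j$ in \eqref{eq:MN} satisfy the quaternion relations in the same order, for instance $N_1N_2=N_3$ rather than $-N_3$. This is precisely what the table records, so $\varphi_R$ is multiplicative and not anti-multiplicative. I would verify one product directly, say $N_1N_2=N_3$, as a sanity check. The remaining products then follow from the relations $N_j^2=-I_4$ and $N_1N_2=N_3$, in the same way that the quaternion relations generate the whole table.
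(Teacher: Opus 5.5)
Your proof is correct and follows the paper's route: the paper writes down the inverse maps $\psi_L,\psi_R$ explicitly, checks both compositions are the identity, and reads multiplicativity off Table~\ref{tab:multiplication}, which is what your Steps 1--3 do. Your extra check that the $N_j$ multiply in quaternion order, e.g.\ $N_1N_2=N_3$, is a worthwhile detail the paper leaves implicit. It shows $\varphi_R$ is a homomorphism rather than an anti-homomorphism, even though the paper describes $R_q$ as right multiplication.
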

\begin{proof}
We define the following $\mathbb{R}$-linear maps:
\begin{equation}\label{psi_LR}
    \begin{split}
        &\psi_L: \mathbb{H}[t] \to \operatorname{Q}_{L}(\mathbb{R}[t]),\quad  \psi_L(\mathcal{P})  = [p_0, p_1, p_2, p_3]_L, \\
        &\psi_R: \mathbb{H}[t] \to \operatorname{Q}_{R}(\mathbb{R}[t]),\quad \psi_R(\mathcal{Q}) = [q_0,q_1,q_2,q_3]_R,  
    \end{split}
\end{equation}
where $\mathcal{P} = p_0 + p_1\qi + p_2\qj + p_3\qk \in \mathbb{H}[t]$ and $\mathcal{Q} = q_0 + q_1\qi + q_2\qj + q_3\qk \in \mathbb{H}[t]$. It is straightforward to verify that $\varphi_L \circ \psi_L = \operatorname{id}_{\mathbb{H}[t]}$ 
and $\psi_L \circ \varphi_L = \operatorname{id}_{\operatorname{Q}_{L}(\mathbb{R}[t])}$. By Table~\ref{tab:multiplication}, one may easily check that $\varphi_L(L_1 L_2) = \varphi_L(L_1) \varphi_L(L_2)$. The proof for $\varphi_R$ is identical and is therefore omitted.
\end{proof}

\subsection{Quadratic factorization}

This subsection is devoted to proving Theorem~\ref{thm:decomp}, which states that every rational curve on $\operatorname{SO}_4(\mathbb{R})$ can be decomposed into a product of quadratic rational curves. To achieve this goal, we first decompose rational curves as a product of isoclinic curves.

Let $\gamma$ be a rational curve on $\operatorname{SO}_4(\mathbb{R})$. We recall from Subsection~\ref{subsec:curveSO4} that $\gamma = \gamma(\infty) \gamma_0$ for some $\gamma_0 \in \operatorname{Rat}(4,I_4)$. Here $\operatorname{Rat}(4,I_4)$ consists of rational curves whose limit at $\infty$ is $I_4$. Thus, it is sufficient to assume $\gamma \in \operatorname{Rat}(4,I_4)$. From the four operators $\mathscr{F}_i$ defined in \eqref{eq:Fi}, we choose an operator $\mathscr{F}_0$ that acts non-trivially on $\gamma$, and denote $\mathscr{F} \coloneqq \mathscr{F}_0$.

\begin{lemma}\label{lem:rightfunctor}
Let $\gamma$ and $\mathscr{F}$ be as above. Then $\mathscr{F}(\gamma) = F/q$, where $F = [f_0, f_1, f_2, f_3]_R \ne 0$ for some $f_0,\dots, f_3 \in \mathbb{R}[t]$. In particular, we have $\mathscr{F}(\gamma)^\top \mathscr{F}(\gamma) = \mathscr{F}(\gamma) \mathscr{F}(\gamma)^\top = \sum_{i=0}^3 ({f_i}/q)^2 I_4$.
\end{lemma}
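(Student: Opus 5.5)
We first show that $\mathscr{F}$ sends every real $4\times 4$ matrix to a matrix of right isoclinic form, i.e. $\mathscr{F}_0(X) \in \operatorname{span}_{\mathbb{R}}\{N_0,N_1,N_2,N_3\}$ for all $X \in \mathbb{R}^{4\times 4}$. Both $X\mapsto\mathscr{F}_0(X)$ and the target condition are $\mathbb{R}$-linear, so it suffices to check this on a basis of $\mathbb{R}^{4\times4}$. The natural basis is $\{M_iN_j\}_{0\le i,j\le 3}$. These sixteen matrices are linearly independent; equivalently, $\mathbb{H}\otimes\mathbb{H}^{op}\cong \operatorname{M}_4(\mathbb{R})$, and independence can be read off from the trace pairing. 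Using Table~\ref{tab:multiplication} and the commutation $M_iN_j=N_jM_i$, we compute $M_k (M_iN_j) M_k = (M_kM_iM_k) N_j$. For $k\ge1$, $M_kM_iM_k$ equals $-M_i$ if $i\in\{0,k\}$ and $M_i$ otherwise.

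Hence $-X+\sum_{k=1}^3 M_kXM_k$ applied to $M_iN_j$ gives $(-1-3)M_iN_j = -4N_j$ for $i=0$. For $i\ge1$ it gives $(-1-1+1+1)M_iN_j=0$. Therefore $\mathscr{F}_0(M_iN_j)=\delta_{i0}N_j$, so $\mathscr{F}_0$ is the projection onto the $N$-component. Since $\gamma(t)$ has entries in $\mathbb{R}[t]/q$ and $\mathscr{F}$ acts entrywise linearly with constant coefficients, we get $\mathscr{F}(\gamma)=F/q$ with $F=\mathscr{F}(P)=[f_0,f_1,f_2,f_3]_R$ and $f_j\in\mathbb{R}[t]$.

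Next we address $F\neq0$. The paper's choice of $\mathscr{F}$ as ``acting non-trivially on $\gamma$'' already presupposes this, but we would also justify it directly. Cayley's factorization (Theorem~\ref{thm:Cayley}) at $t=\infty$ gives $\gamma(\infty)=I_4=M_0N_0$, so $\mathscr{F}(I_4)=N_0=I_4\neq 0$. By continuity, $\mathscr{F}(\gamma(t))\to I_4$ as $t\to\infty$, so $F\ne0$ as a polynomial matrix. More generally, writing $\gamma(t)=L(t)R(t)$ pointwise with $L=\sum a_iM_i$ and $R=\sum b_jN_j$, the projection formula gives $\mathscr{F}(\gamma(t))=a_0(t)R(t)$.

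Finally, the identity $\mathscr{F}(\gamma)^\top\mathscr{F}(\gamma)=\sum_i (f_i/q)^2 I_4$ follows from Lemma~\ref{lem:keep_multip}. The matrix $F^\top$ equals $[f_0,-f_1,-f_2,-f_3]_R$ because each $N_i$ ($i\ge1$) is skew-symmetric. So $F^\top F=\psi_R(\Cj{\mathcal{F}}\mathcal{F})=\psi_R(\QNorm{\mathcal{F}})=(\sum f_i^2) I_4$, where $\mathcal{F}=\varphi_R(F)$, and the same holds for $FF^\top$. Dividing by $q^2$ gives the claim.

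The main obstacle is only the bookkeeping in the first step: confirming the sign pattern $M_kM_iM_k$ and the linear independence of $\{M_iN_j\}$. If the independence argument becomes awkward, we will instead verify $\mathscr{F}_0(E_{ab})$ directly on the sixteen elementary matrices.
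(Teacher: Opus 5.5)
Your argument is correct, but its central step takes a different route from the paper's.

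\textbf{The paper's route.} The paper writes $f_0,\dots,f_3$ as explicit signed sums of the entries $p_{ij}$ and asserts by direct calculation that $\mathscr{F}(\gamma)=F/q$ with $F$ right isoclinic. It gets $F\neq0$ as you do, from $\gamma(\infty)=I_4$ and linearity of $\mathscr{F}$. Your appeal to Theorem~\ref{thm:Cayley} there is unnecessary, since $I_4=M_0N_0$ and $\mathscr{F}(I_4)=I_4$ are immediate. The norm identity is left implicit as a property of right isoclinic matrices.

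\textbf{Your route.} You show structurally that $\mathscr{F}_0$ is the projection $\sum_{i,j}c_{ij}M_iN_j\mapsto\sum_j c_{0j}N_j$. Your signs for $M_kM_iM_k$ are right. The independence you flagged is easy: $M_m$, $N_n$ and $M_mN_n$ are traceless for $m,n\ge1$. This gives $\operatorname{tr}\bigl((M_iN_j)^\top M_kN_l\bigr)=4\delta_{ik}\delta_{jl}$, hence $f_j=\tfrac14\operatorname{tr}(N_j^\top P)$.

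\textbf{What each buys.} The paper's route gives formulas that go straight into Algorithm~\ref{alg:decomp} with no basis argument. Yours explains why $\mathscr{F}_0(X)$ is right isoclinic for every $X$. Your pointwise identity $\mathscr{F}(LR)=a_0R$, valid also when $a_0=0$, is exactly what the proof of Lemma~\ref{lem:leftfunctor} needs; there it is only quoted from the Cayley algorithm.

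Your route also fixes the signs. It gives $f_1=\tfrac14(p_{12}-p_{21}-p_{34}+p_{43})$, the negative of the paper's displayed $f_1$ (similarly for $f_2,f_3$). Your sign agrees with the paper's own example, where the $(1,2)$ entry of $F$ is $1-t^3$. This slip does not affect the lemma, which only asserts that suitable $f_i$ exist.

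Your derivation of $F^\top F=FF^\top=(\sum_i f_i^2)I_4$ from $F^\top=\psi_R(\Cj{\mathcal{F}})$ and Lemma~\ref{lem:keep_multip} is also fine.
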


\begin{proof}
We write $\gamma = P/q$ for some $P = (p_{ij})_{1 \le i,j \le 4} \in \mathrm{M}_4(\mathbb{R}[t])$ and $q \in \mathbb{R}[t]$ as in Subsection~\ref{subsec:curveSO4}. Denote 
\begin{align*}
    f_0& \coloneqq (p_{11} + p_{22} + p_{33} + p_{44})/4,\qquad
f_1 \coloneqq (p_{21} - p_{12} - p_{43} + p_{34})/4,\\
f_2&\coloneqq (p_{31} + p_{42} - p_{13} - p_{24})/4,\qquad
f_3\coloneqq (p_{41} - p_{32} + p_{23} - p_{14})/4.
\end{align*}
A direct calculation yields $\mathscr{F}(\gamma) = F/q$. To check $\mathscr{F}(\gamma) \ne 0$, we notice that by assumption, it holds that $\gamma(\infty) = I_4$. This implies that $P = q_0 I_4 + Q$, where $q_0$ is the leading homogeneous part of $q$ and each element of $Q \in \mathrm{M}_4(\mathbb{R}[t])$ has degree strictly less than $\deg (q)$. Since $\mathscr{F}$ is a linear operator, we conclude that $\mathscr{F}(\gamma) = I_4 + \mathscr{F}(Q)/q \ne 0$. 
\end{proof}
It is worth remarking that $\mathscr{F}(\gamma)$ is a rational curve on $\operatorname{SO}_4(\mathbb{R})$ only if $\sum_{i=0}^3 (f_i/q)^2 = 1$. 

\begin{example}
    Let $\gamma = P/(t^2+1)^2$ be the rational curve on $\operatorname{SO}_4(\mathbb{R})$ of degree $4$ with poles $\pm \qi$, where
    $$P(t) =
    \begin{pmatrix}
t^{4} - t^{2} - 2t & -2t^{3} - t^{2} + 1 & -t^{3} + 2t^{2} + t & -t^{3} - 2t^{2} + t \\
2t^{3} + t^{2} - 1 & t^{4} - t^{2} - 2t & -t^{3} - 2t^{2} + t & t^{3} - 2t^{2} - t \\
t^{3} + t & t^{3} + t & t^{4} + t^{2} & -t^{2} - 1 \\
t^{3} + t & -t^{3} - t & t^{2} + 1 & t^{4} + t^{2}
\end{pmatrix}.$$
Then we have
$$\mathscr{F}(\gamma(t)) = \frac{t^3-1}{(t^2+1)^2}
\begin{pmatrix}
t & -1 & 0 & 0 \\
1 & t & 0 & 0 \\
0 & 0 & t & 1 \\
0 & 0 & -1 & t
\end{pmatrix}.$$
This matrix is not a rational curve on $\operatorname{SO}_4(\mathbb{R})$, since its columns have squared norm $(t^3-1)/(t^2+1)$ rather than $1$.
\end{example}

Next, we consider the operator
\begin{equation}\label{eq:functor_G}
\mathscr{G}: \mathbb{R}^{4\times 4} \to \mathbb{R}^{4\times 4},\qquad \mathscr{G}(X) \coloneqq X \mathscr{F}(X)^\top = \frac{1}{4}(X X^\top - X  M_1  X^\top M_1 - X  M_2  X^\top M_2 - X M_3  X^\top M_3).
\end{equation}

\begin{lemma}\label{lem:leftfunctor}
For any $\gamma \in \operatorname{Rat}(4,I_4)$, we have $\mathscr{G}(\gamma) = G/q^2$ for some nonzero $G \in \mathrm{M}_4(\mathbb{R}[t])$ of left isoclinic form.
\end{lemma}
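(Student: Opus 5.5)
The approach is to reduce the claim to a pointwise statement on $\operatorname{SO}_4(\mathbb{R})$ and then pass to polynomial identities. Write $\gamma = P/q$ as in Subsection~\ref{subsec:curveSO4}. By Lemma~\ref{lem:rightfunctor}, $\mathscr{F}(\gamma) = F/q$ with $F=[f_0,f_1,f_2,f_3]_R\in \operatorname{M}_4(\mathbb{R}[t])$. Hence
\[
\mathscr{G}(\gamma) = \gamma\,\mathscr{F}(\gamma)^\top = \frac{P F^\top}{q^2}.
\]
We set $G := PF^\top \in \operatorname{M}_4(\mathbb{R}[t])$. It then remains to show two things: $G$ is of left isoclinic form, and $G\neq 0$.

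\textbf{Step 1: pointwise computation.} Let $X\in\operatorname{SO}_4(\mathbb{R})$. By Theorem~\ref{thm:Cayley}, $X = L_pR_{q'}$ for some unit quaternions $p,q'$. Since the $M_i$ commute with $R_{q'}$, we get
\[
\mathscr{F}(L_pR_{q'}) = R_{q'}\cdot\Bigl(-\tfrac14\bigl(-L_p+\textstyle\sum_{i=1}^3 M_iL_pM_i\bigr)\Bigr).
\]
By Lemma~\ref{lem:keep_multip}, the bracket corresponds to the quaternion $-p+\qi p\qi+\qj p\qj+\qk p\qk$. One checks on the basis elements that $\qi p\qi+\qj p\qj+\qk p\qk = -p-2\Cj{p}$; for instance, $p=1$ gives $-3$ and $p=\qi$ gives $\qi$. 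The bracket is therefore $-2(p+\Cj p) = -4p_0$, and so $\mathscr{F}(X) = p_0R_{q'}$. Consequently
\[
\mathscr{G}(X) = L_pR_{q'}\,p_0R_{q'}^\top = p_0L_p,
\]
which is of left isoclinic form.

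\textbf{Step 2: from points to polynomials.} Being of left isoclinic form is a system of real linear equations on the entries: equalities and negations of entries, such as $x_{11}=x_{22}$ and $x_{12}=-x_{21}$. For every real $t$ we have $\gamma(t)\in\operatorname{SO}_4(\mathbb{R})$, so by Step 1 the matrix $G(t)/q(t)^2$ satisfies these equations. Since $q$ has no real root, $G(t)$ satisfies them for all $t\in\mathbb{R}$. Each equation is then a polynomial in $t$ vanishing on all of $\mathbb{R}$, hence identically zero. Thus $G\in\operatorname{Q}_L(\mathbb{R}[t])$.

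\textbf{Step 3: nonvanishing.} Because $\gamma\in\operatorname{Rat}(4,I_4)$, we have $\gamma(t)\to I_4$ as $t\to\infty$. By continuity of the polynomial map $\mathscr{G}$ and Step 1 with $p=1$, we get $\mathscr{G}(\gamma(t))\to\mathscr{G}(I_4)=I_4$. Hence $G/q^2$ is not identically zero, and so $G\neq0$.

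The main obstacle is the identity $\sum_i M_iL_pM_i \leftrightarrow -p-2\Cj p$ in Step 1, together with keeping careful track of the commutation of the $M_i$ with right isoclinic matrices. Once $\mathscr{F}(L_pR_{q'})=p_0R_{q'}$ is established, the rest is routine.
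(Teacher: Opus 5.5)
Your proof is correct and follows the paper's route. You set $G=PF^\top$, use the pointwise Cayley factorization of $\gamma(t_0)$ to see that $\mathscr{G}(\gamma(t_0))$ is a real multiple of a left isoclinic matrix, and then conclude that the linear isoclinic relations, vanishing at every real $t_0$, hold identically in $\mathbb{R}[t]$. Your explicit identity $\mathscr{F}(L_pR_{q'})=p_0R_{q'}$ justifies the paper's unproved claim that $\mathscr{F}(\gamma(t_0))$ is a scalar multiple of the right factor, and your limit argument $\mathscr{G}(\gamma(t))\to I_4$ is an equivalent substitute for the paper's use of $\mathscr{F}(\gamma)\neq 0$ together with the invertibility of $\gamma$.
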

\begin{proof}
By Lemma~\ref{lem:rightfunctor}, we have $\mathscr{F}(\gamma)\ne 0$. Since $\gamma$ is a rational curve on $\operatorname{SO}_4(\mathbb{R})$, we deduce that $\mathscr{G}(\gamma) = \gamma \mathscr{F}(\gamma)^\top \ne 0$. We write $\gamma = P/q$ for some $P \in \mathrm{M}_4(\mathbb{R}[t])$ and $q \in \mathbb{R}[t]$ as in Subsection~\ref{subsec:curveSO4}. According to Lemma~\ref{lem:rightfunctor}, there exists some $F \in \mathrm{M}_4(\mathbb{R}[t])$ of right isoclinic form such that $\mathscr{F}(\gamma) = F/q$. By definition, $\mathscr{G}(\gamma) = P F^\top/q^{2}$. Denote $G \coloneqq P F^\top \in \mathrm{M}_4(\mathbb{R}[t])$. We claim that $G$ is of left isoclinic form. 

Let $t_0$ be a fixed real number. Then $\gamma(t_0)$ is a fixed matrix in $\operatorname{SO}_4(\mathbb{R})$. By the discussion in Subsection~\ref{subsec:Cayley}, $\gamma(t_0)$ admits a factorization $\gamma(t_0) = L R$ for some left isoclinic matrix $L$ and right isoclinic matrix $R$. Suppose that $a$ is the real number such that $a R = \mathscr{F}(\gamma(t_0))$. Hence $\mathscr{G}(\gamma(t_0)) = LR (aR)^\top = a L$ and $G(t_0) = a q(t_0)^2 L$. By Definition~\ref{def:isomat}, the subspace $\operatorname{Q}_{L}(\mathbb{R}[t])$ consisting of polynomial matrices of left isoclinic form, is defined by some linear equations $h_1,\dots, h_s \in \mathbb{Z}[x_{11},\dots, x_{44}]$. Since $h_1 (G(t_0)) = \cdots = h_s (G(t_0)) = 0$ for any fixed $t_0 \in \mathbb{R}$ and coefficients of $h_i$'s are integers, we conclude that $h_1(G) = \cdots = h_s(G) = 0$, thereby $G \in \operatorname{Q}_{L}(\mathbb{R}[t])$.
\end{proof}

Given $\gamma = P/q \in \operatorname{Rat}(4,I_4)$, Lemmas~\ref{lem:rightfunctor} and \ref{lem:leftfunctor} imply $\mathscr{F}(\gamma) = F/q$ and $\mathscr{G}(\gamma) = G/q^2$, where $F = [f_0, f_1, f_2, f_3]_R$ and $G = [g_0, g_1, g_2, g_3]_L$ for some $f_i, g_i \in \mathbb{R}[t]$, $0 \le i \le 3$. Here $f_0, g_0$ are monic by definition, and $\deg (f_0) > \deg (f_i)$ for $1 \le i \le 3$. Note that both $F$ and $G$ are nonzero. Hence not all of the $f_i$'s are zero, and similarly, not all of the $g_i$'s are zero. Denote by $\delta_f \coloneqq \gcd(f_0, f_1, f_2, f_3)$ and $\delta_g \coloneqq \gcd(g_0, g_1, g_2, g_3)$ the monic greatest common divisors. Then both $\delta_f$ and $\delta_g$ are nonzero, and we define
\begin{equation}\label{eq:def_LR}
    R \coloneqq \frac{F}{\delta_f}, \qquad L  \coloneqq \frac{G}{\delta_g}.
\end{equation}
By definition, $\delta_f$ and $\delta_g$ are the greatest common divisors of the four coefficient polynomials of $F$ and $G$. Dividing by them removes all nonconstant common factors. Since this division is by scalar real polynomials, it preserves the right and left isoclinic linear relations among the matrix entries; hence $R$ and $L$ have primitive coefficient tuples and retain their right and left isoclinic forms. Moreover, the diagonal elements of both $L$ and $R$ are monic, and each has degree strictly larger than that of any non-diagonal element.

\begin{lemma}\label{lem:LRrelation}
Let $L$ and $R$ be defined as in~\eqref{eq:def_LR}. Then $\gamma = L R/q$.
\end{lemma}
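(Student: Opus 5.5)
The plan is to express $P$ directly through $G$ and $F$, and then use primitivity and monicity to show that the scalar factors cancel exactly. Write $\gamma=P/q$ and $\mathscr{F}(\gamma)=F/q$ with $F=[f_0,f_1,f_2,f_3]_R$, and set $\nu\coloneqq f_0^2+f_1^2+f_2^2+f_3^2\in\mathbb{R}[t]$. By Lemma~\ref{lem:rightfunctor} we have $F^\top F=FF^\top=\nu I_4$, and $\nu\neq 0$ because $F\neq 0$. Since $G=PF^\top$ by the proof of Lemma~\ref{lem:leftfunctor}, multiplying on the right by $F$ gives
\[
GF=\nu P .
\]
Substituting $G=\delta_g L$ and $F=\delta_f R$ yields
\[
\delta_f\delta_g\, LR=\nu P .
\]
So the statement $\gamma=LR/q$ is equivalent to $LR=P$, and it remains to show that the rational function $c\coloneqq \nu/(\delta_f\delta_g)$ equals $1$.

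First, I will show that the entries of $LR$ are coprime polynomials. This is exactly the argument in the proof of Lemma~\ref{lem:commu}, which uses only polynomial matrices of left and right isoclinic form with primitive coefficient tuples. Each product $\ell_i r_j$ is a linear combination of the entries of $LR$, as in \eqref{eq:lrmultip_prop} and its off-diagonal analogues. Hence any irreducible common divisor of the entries of $LR$ would divide every $\ell_i r_j$. This contradicts the primitivity of $(\ell_0,\dots,\ell_3)$ and $(r_0,\dots,r_3)$, which holds by construction of $L$ and $R$.

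Next, write $c=a/b$ with $a,b\in\mathbb{R}[t]$ coprime, so that $b\,LR=a\,P$.
\begin{itemize}
\item Since $a$ divides $b\cdot(\text{entries of }LR)$ and is coprime to $b$, it divides every entry of $LR$; by the previous step $a$ is a constant.
\item Symmetrically, $b$ divides every entry $p_{ij}$ of $P$.
\item From $PP^\top=q^2I_4$ we get $b^2\mid q^2$, hence $b\mid q$, so $b\mid\gcd(p_{11},\dots,p_{44},q)=1$.
\end{itemize}
Therefore $LR=\kappa P$ for a nonzero real constant $\kappa$.

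Finally, I fix $\kappa=1$ by comparing leading coefficients on the diagonal. The diagonal entries of $L$ and $R$ are monic and strictly dominate the off-diagonal entries in degree. Consequently each diagonal entry of $LR$ equals $\ell_0r_0$ plus terms of strictly lower degree, and so it is monic. The diagonal entries of $P$ are monic by condition (iv). Hence $\kappa=1$, which gives $LR=P$ and $\gamma=LR/q$.

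The main obstacle I expect is the first step, coprimality of the entries of $LR$. It requires checking that all sixteen products $\ell_ir_j$, not only the four diagonal ones displayed in \eqref{eq:lrmultip_prop}, are recoverable as linear combinations of the entries of $LR$; this follows from the invertibility of the linear map $(u_{ij})\mapsto(p_{mn})$ in the explicit matrix of Lemma~\ref{lem:commu}.
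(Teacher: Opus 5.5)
Your proposal is correct and follows essentially the same route as the paper: from $G=PF^\top$ and $F^\top F=\nu I_4$ you derive $\delta_f\delta_g\,LR=\nu P$, and then you use coprimality of the entries of $LR$ (the $\ell_i r_j$ argument of Lemma~\ref{lem:commu}) together with coprimality of the entries of $P$ to force the scalar factor to equal $1$. Your version is slightly more careful than the paper's in two places it leaves implicit: you derive $\gcd(p_{11},\dots,p_{44})=1$ from $PP^\top=q^2I_4$, and you fix the remaining real constant using monicity of the diagonal entries.
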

\begin{proof}
By the construction of $L$ and $R$, we have
\[
\mathscr{F}(\gamma)  = \frac{\delta_f R}{q}, \qquad \mathscr{G}(\gamma) =\frac{\delta_g L}{q^{2}}.
\]
We write $L = [\ell_0, \ell_1, \ell_2, \ell_3]_L$ and $R = [r_0, r_1, r_2, r_3]_R$. The construction of $L$ and $R$ gives $\operatorname{gcd}(\ell_0, \ell_1, \ell_2, \ell_3) = \operatorname{gcd}(r_0, r_1, r_2, r_3) = 1$, since the common scalar factors are removed by $\delta_g$ and $\delta_f$, respectively. Denote $m \coloneqq {\delta_f}^2\sum_{i=0}^{3}{r_i}^2$. Then 
\begin{equation*} 
\frac{\delta_f \delta_g LR}{q^3}= \mathscr{G}(\gamma) \mathscr{F}(\gamma) =  \frac{PF^\top 
    F}{q^3} =  \frac{mP}{q^{2}} \gamma.
\end{equation*}
Here the last equation follows from Lemma~\ref{lem:rightfunctor}. This implies
\begin{equation}\label{eq:LRP}
    \delta_f \delta_g LR  = m P.
\end{equation}
We claim that $m =\delta_f\delta_g$, from which we immediately obtain $\gamma = P/q = LR/q$.

To prove the claim, we set 
\[
\delta \coloneqq \delta_f\delta_g, \qquad Q \coloneqq LR = \bigl(q_{ij}\bigr)_{1 \le i,j \le 4} ,\qquad P =   \bigl(p_{ij}\bigr)_{1 \le i,j \le 4}.
\]
Since $\gcd(p_{11},p_{12} \dots p_{44}) =1$, equation~\eqref{eq:LRP} implies $\delta \mid m$. For the reverse divisibility, we show that elements of $Q$ have no non-constant common divisor, so that \eqref{eq:LRP} implies $m \mid \delta$. We proceed by contradiction. Suppose that there is an irreducible polynomial $\xi \in \mathbb{R}[t]$ dividing every element of $Q$. By expanding the product $Q = LR$ and taking linear combinations of the elements of $Q$, one obtains all products $\ell_ir_j$, $0\le i,j\le 3$, up to nonzero real scalar factors. Hence $\xi$ divides $\ell_ir_j$ for each $0\le i,j\le 3$. Since $\gcd(\ell_0,\ell_1,\ell_2,\ell_3)= 1$, there is an index $i_0$ such that $\xi \nmid l_{i_0}$. Then $\xi\mid r_j$ for every $0 \le j \le 3$, contradicting $\gcd(r_0,r_1,r_2,r_3)=1$.
\end{proof}

We are now ready to prove the first factorization theorem. 

\begin{theorem}\label{thm:decomp}
Every $\gamma \in \operatorname{Rat}(4,I_4)$ of degree $2d$ is a product of $d$ quadratic rational curves in $\operatorname{Rat}(4,I_4)$.
\end{theorem}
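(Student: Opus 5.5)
We start from Lemma~\ref{lem:LRrelation}, which writes $\gamma = LR/q$ with $L=[\ell_0,\dots,\ell_3]_L$ and $R=[r_0,\dots,r_3]_R$ primitive, with monic diagonal entries of top degree. Let $\mathcal{L}:=\varphi_L(L)$ and $\mathcal{R}:=\varphi_R(R)$. These are primitive monic quaternion polynomials.

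\textbf{Step 1: norms.} Since $\gamma$ is orthogonal, $\gamma\gamma^\top=I_4$ gives
\[
LL^\top RR^\top=q^2I_4,
\]
using Lemma~\ref{lem:commu} to commute the isoclinic factors. Moreover $LL^\top=\nu(\mathcal{L})I_4$ and $RR^\top=\nu(\mathcal{R})I_4$, so $\nu(\mathcal{L})\nu(\mathcal{R})=q^2$. Set $a=\deg\mathcal{L}$ and $b=\deg\mathcal{R}$. Then $a+b=\deg q=2d$, since the leading terms are monic. Both norms are positive-definite monic real polynomials, because $q$ has no real roots. They are products of irreducible quadratics dividing $q^2$.

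\textbf{Step 2: linear factors.} By Theorem~\ref{thm:alg_galg}, write $\mathcal{L}=\mathcal{A}_1\cdots\mathcal{A}_a$ and $\mathcal{R}=\mathcal{B}_1\cdots\mathcal{B}_b$ with monic linear factors. Each norm $\nu(\mathcal{A}_i)$ and $\nu(\mathcal{B}_j)$ is a monic irreducible quadratic. Applying $\psi_L$ and $\psi_R$ (Lemma~\ref{lem:keep_multip}) gives $L=\prod\psi_L(\mathcal{A}_i)$ and $R=\prod\psi_R(\mathcal{B}_j)$.

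\textbf{Step 3: pairing.} Multiplying the norms gives $\prod_i\nu(\mathcal{A}_i)\prod_j\nu(\mathcal{B}_j)=q^2$, a multiset of $a+b=2d$ irreducible quadratics that is the square of the factorization of $q$. Hence every irreducible quadratic occurs an even number of times in the combined list of norms, and $q$ is the product of one representative from each pair. Pair up the $2d$ linear factors into $d$ pairs of equal norm $\rho_k$, with $q=\prod_k\rho_k$. Each pair is of one of three types:
\begin{enumerate}
\item left--left: $\psi_L(\mathcal{A}_i\mathcal{A}_{i'})/\rho$;
\item right--right: the analogous right product;
\item mixed: $\psi_L(\mathcal{A}_i)\psi_R(\mathcal{B}_j)/\rho$.
\end{enumerate}
Each of these is orthogonal, since its norm is $\rho^2$, and tends to $I_4$ at infinity.

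\textbf{Main obstacle: the ordering.} The pairs must be adjacent in the product. Left factors commute with right factors by Lemma~\ref{lem:commu}, so mixed pairs can be freely arranged. Within $\mathcal{L}$, however, two equal-norm factors may be far apart. To fix this, I would use the freedom in Algorithm~\ref{alg:quaternion}: choosing the order in which the quadratic factors $\rho$ of the norm are split off produces any prescribed norm ordering. So I would order the norms of $\mathcal{L}$ so that duplicated quadratics appear consecutively, leaving the unpaired ones to be matched with factors of $\mathcal{R}$, and do the same for $\mathcal{R}$. Then
\[
\gamma=\Bigl(\prod \text{LL-pairs}\Bigr)\Bigl(\prod \text{RR-pairs}\Bigr)\Bigl(\prod_k \psi_L(\mathcal{A})\psi_R(\mathcal{B})/\rho_k\Bigr),
\]
after commuting the left and right factors.

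\textbf{Degree and $\gcd$ checks.} Each factor must have degree exactly $2$, which requires its entries to have no common factor with $\rho$. For mixed pairs this follows from the argument of Lemma~\ref{lem:commu}. For left--left pairs it can fail if $\mathcal{A}_{i'}=\mathcal{A}_i^{\ast}$, which would make the product $\rho$ itself; but that contradicts the primitivity of $\mathcal{L}$, and the same holds for right--right pairs. Finally, the number of factors is $d$, since $\sum_k\deg\rho_k=2d$.
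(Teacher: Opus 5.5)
Your proposal is correct and follows the paper's proof essentially step for step. You extract $\gamma = LR/q$ via Lemma~\ref{lem:LRrelation}, use $\nu(\mathcal{L})\nu(\mathcal{R}) = q^2$, factor $\mathcal{L}$ and $\mathcal{R}$ into monic linear factors, and group equal-norm factors into left--right, left--left and right--right pairs, commuting left past right factors by Lemma~\ref{lem:commu}. You are also more explicit than the paper on two points it leaves implicit. First, since $\mathcal{L},\mathcal{R}$ are primitive, Algorithm~\ref{alg:quaternion} can realize any prescribed order of the norm factors, which is what makes equal-norm pairs adjacent. Second, a left--left or right--right pair cannot collapse to $\rho I_4$, since that would force $\rho \mid \mathcal{L}$ or $\rho \mid \mathcal{R}$; this is what guarantees each factor has degree exactly $2$.
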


\begin{proof}
We write $\gamma = P/q$ for some $P = (p_{ij})_{1 \le i,j \le 4} \in \mathrm{M}_4(\mathbb{R}[t])$ and $q \in \mathbb{R}[t]$ as in Subsection~\ref{subsec:curveSO4}. By Lemma~\ref{lem:LRrelation}, we have $\gamma = LR/q$, where $L \in \operatorname{Q}_L(\mathbb{R}[t])$, $R \in \operatorname{Q}_R(\mathbb{R}[t])$ are defined as in \eqref{eq:def_LR}. Since $\gamma\gamma^\top = I_4$, the proof of Lemma~\ref{lem:commu} implies 
$(LL^\top)(RR^\top) = (LR)(LR)^{\top} = q^2 I_4$. Note that $q$ is monic and has no real roots. We may write $q=q_1 \cdots q_d$ for some monic irreducible quadratic polynomials $q_1,\dots, q_d$ in $\mathbb R[t]$, counted with multiplicity. Then $(LL^\top)(RR^\top) = q_1^2 \cdots q_d^2 I_4$. We observe that $LL^\top = f I_4$ and $RR^\top = g I_4$ for some $f,g\in \mathbb{R}[t]$. After reordering the $q_k$'s, we may write
\[
f = q_1 \cdots q_s \, q_{s+1}^2 \cdots q_{s+a}^2, \qquad
g = q_1 \cdots q_s \, q_{s+a+1}^2 \cdots q_d^2,
\]
where $s, a \ge 0$ and $s + a \le d$. Here $s$ counts the irreducible factors shared by $f$ and $g$, $a$ counts those
appearing only in $f$, and the remaining $d - s - a$ appear only in $g$.

Let $\varphi_L, \varphi_R$ and $\psi_L, \psi_R$ be the maps defined by~\eqref{phi_LR} and~\eqref{psi_LR}, and set
\[
\mathcal{L} \coloneqq \varphi_L(L) \in \mathbb{H}[t], \qquad \mathcal{R} \coloneqq \varphi_R(R) \in \mathbb{H}[t].
\]
Note that the maps $\varphi_L$ and $\varphi_R$ record the four coefficient polynomials of $L$ and $R$, respectively. Since these coefficient polynomials have no nonconstant common divisor, the quaternion polynomials $\mathcal{L}$ and $\mathcal{R}$ are primitive. Moreover, both $\mathcal{L}$ and $\mathcal{R}$ are monic. Applying Theorem~\ref{thm:alg_galg} to $\mathcal{L}$ and $\mathcal{R}$ gives decompositions of the form
\[
\mathcal{L} = (t - h_{L,1}) \cdots (t - h_{L,s + 2a}), \qquad
\mathcal{R} = (t - h_{R,1}) \cdots (t - h_{R,2d - s - 2a}),
\]
for some $h_{L,1},\dots, h_{L,s + 2a},h_{R,1},\dots, h_{R,2d - s - 2a} \in \mathbb{H}$ and
\begin{equation*}
    \begin{split}
        \{\, \QNorm{t - h_{L,1}}, \dots, \QNorm{t - h_{L,s + 2a}} \,\}
&= \{ q_1, \dots, q_s, q_{s+1}, q_{s+1}, \dots, q_{s+a}, q_{s+a} \},\\
\{\, \QNorm{t - h_{R,1}}, \dots, \QNorm{t - h_{R,2d - s - 2a}} \,\}
&= \{ q_1, \dots, q_s, q_{s+a+1}, q_{s+a+1}, \dots, q_{d}, q_{d} \}.
    \end{split}
\end{equation*}

Let $L_i \coloneqq \psi_L(t - h_{L,i})$ for $1 \le i \le s + 2a$ and 
$R_j \coloneqq \psi_R(t - h_{R,j})$ for $1 \le j \le 2d - s - 2a$. 
By Lemma~\ref{lem:keep_multip}, we have
\[
L = L_1 \cdots L_{s + 2a}, \qquad 
R = R_1 \cdots R_{2d - s - 2a}.
\]
Thus we obtain
\[
\gamma = \frac{ (L_1 R_1) \cdots (L_s R_s) \, (L_{s+1}L_{s+2}) \cdots (L_{s+2a-1}L_{s + 2a}) \, (R_{s+1}R_{s+2})  \cdots (R_{2d-s-2a -1}R_{2d - s - 2a})}{q}.
\]
The reordering here only moves right-isoclinic factors past left-isoclinic factors, which is legitimate by Lemma~\ref{lem:commu}. For each $1 \le k \le d$, we define 
\[
P_k(t) \coloneqq
\begin{cases}
L_k R_k & \text{if~} 1 \le k \le s,\\
L_{s+2i-1} L_{s+2i} & \text{if~}k = s+i \text{~and~} 1 \le i \le a,\\
R_{s+2j-1} R_{s+2j} & \text{if~}k = s+a+j \text{~and~} 1 \le j \le d - s - a.
\end{cases}
\]
By definition, $P_k P_k^\top = q_k^2 I_4$. Thus, $\gamma_k \coloneqq P_k / q_k$ satisfies $\gamma_k \gamma_k^\top = I_4$ and it is a quadratic rational curve on $\operatorname{SO}_4(\mathbb{R})$. Therefore $\gamma = \gamma_1 \cdots \gamma_d$. This completes the proof.
\end{proof}

\begin{remark}
The factorization in Theorem~\ref{thm:decomp} is not unique. Any pairing of the $2d$ linear factors that puts two factors
of the same norm together yields a valid factorization, provided the pairing can be realized
by swaps between $L_i$'s and $R_j$'s, without exchanging two $L_i$'s or two $R_j$'s past each other.
\end{remark}

The proof of Theorem~\ref{thm:decomp} is constructive. It not only establishes the existence of a factorization for rational curves on $\operatorname{SO}_4(\mathbb{R})$ but also provides an explicit algorithm for obtaining such a factorization.

\begin{algorithm}
\caption{Factorization algorithm for rational curves on $\operatorname{SO}_4(\mathbb{R})$ into quadratic rational curves}
\label{alg:decomp}
\begin{algorithmic}[1]
\Require A rational curve $\gamma = P/q \in \operatorname{Rat}(4,I_4)$ of degree $2d$.
\Ensure A list $[\gamma_1,\ldots,\gamma_{d}]$ of quadratic rational curves in $\operatorname{Rat}(4,I_4)$ such that $\gamma = \gamma_1 \cdots \gamma_{d}$.

\State Compute $\mathscr{F}(\gamma)$ and $\mathscr{G}(\gamma) = \gamma \mathscr{F}(\gamma)^\top$.

\State Extract factors $L$ and $R$ from $\mathscr{F}(\gamma) = (\delta_f/q) R$ and $\mathscr{G}(\gamma) = (\delta_g/q^{2}) L$.

\State Convert to quaternion polynomials $\mathcal{L} = \varphi_L(L)$, $\mathcal{R} = \varphi_R(R)$.

\State Factor $\mathcal{L}(t)$, $\mathcal{R}(t)$ into $ \prod (t-h_{L,i})$, $\prod (t-h_{R,j})$ such that equal norm polynomials are paired (Algorithm~\ref{alg:quaternion}).

\State Map the linear factors back via $\psi_L$ and $\psi_R$ to obtain linear matrix polynomials $L_i$ and $R_j$.

\State Multiply $L_i$, $R_j$ pairs with equal norm polynomial $q_k$ to obtain quadratic $P_k$, and set $\gamma_k \coloneqq P_k/q_k$.

\State \Return $[\gamma_1,\ldots,\gamma_{d}]$.
\end{algorithmic}
\end{algorithm}

\begin{corollary}
Algorithm~\ref{alg:decomp} factorizes any $\gamma \in \operatorname{Rat}(4,I_4)$ of degree $2d$ into exactly $d$ quadratic rational curves in $\operatorname{Rat}(4,I_4)$.
\end{corollary}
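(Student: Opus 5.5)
The corollary follows by checking that each step of Algorithm~\ref{alg:decomp} carries out the corresponding step in the proof of Theorem~\ref{thm:decomp}. I would then confirm that the output list has exactly $d$ entries, that each entry is a quadratic curve in $\operatorname{Rat}(4,I_4)$, and that their product is $\gamma$.

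\textbf{Extraction and conversion (Steps 1--3).} By Lemma~\ref{lem:rightfunctor}, $\mathscr{F}(\gamma)=F/q$ with $F$ a nonzero matrix of right isoclinic form. By Lemma~\ref{lem:leftfunctor}, $\mathscr{G}(\gamma)=G/q^2$ with $G$ a nonzero matrix of left isoclinic form. Dividing by the monic gcds $\delta_f,\delta_g$ gives the $R$ and $L$ of~\eqref{eq:def_LR}, and Lemma~\ref{lem:LRrelation} gives $\gamma=LR/q$. By Lemma~\ref{lem:keep_multip}, $\mathcal{L}=\varphi_L(L)$ and $\mathcal{R}=\varphi_R(R)$ are primitive and monic, so Algorithm~\ref{alg:quaternion} (Theorem~\ref{thm:alg_galg}) applies to each.

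\textbf{Factorization and pairing (Steps 4--6).} Theorem~\ref{thm:alg_galg} factors $\mathcal{L}$ and $\mathcal{R}$ into monic linear factors. The norm bookkeeping in the proof of Theorem~\ref{thm:decomp}, which uses $LL^\top=fI_4$, $RR^\top=gI_4$ and $fg=q^2$, fixes the number of factors: $\mathcal{L}$ has $s+2a$ and $\mathcal{R}$ has $2d-s-2a$, for $2d$ in total. Their norms are the $q_k$ with the stated multiplicities. The main obstacle is the requirement in Step~4 that equal norms be adjacent.

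\begin{itemize}
\item \emph{Factors shared by $\mathcal{L}$ and $\mathcal{R}$.} These can be paired across the two sides no matter how they are ordered, because Lemma~\ref{lem:commu} lets left and right isoclinic factors commute.
\item \emph{Repeated norms within one side.} Here the $q_{s+i}^2$ in $f$ must appear as consecutive linear factors. I would get this from the freedom in Algorithm~\ref{alg:quaternion} to choose the quadratic norm factor $\rho$ at each recursive step. Choosing $\rho$ equal to the same $q_k$ twice in a row produces two adjacent factors of norm $q_k$.
\item \emph{Ordering of the remaining factors.} First choose the $s$ shared norms, then the repeated ones, in an order that makes the product rearrangement valid.
\end{itemize}

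\textbf{Conclusion.} Each $P_k$ is then $L_kR_k$, $L_{s+2i-1}L_{s+2i}$ or $R_{s+2j-1}R_{s+2j}$. It satisfies $P_kP_k^\top=q_k^2I_4$, has $\det>0$ by continuity, and has monic diagonal, so $P_k/q_k\in\operatorname{Rat}(4,I_4)$. Each factor has degree $2$ rather than less, by Lemma~\ref{lem:commu} in the cross-paired case and by primitivity of the product in the same-side case. This gives exactly $d$ quadratic factors whose product is $\gamma$.
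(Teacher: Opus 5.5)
\textbf{Verdict: the overall route matches the paper's, but the pairing step contains a false claim.} The paper gives this corollary no separate proof; it is the constructive content of the proof of Theorem~\ref{thm:decomp}, which you retrace.

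The error is your first bullet, that shared factors can be cross-paired ``no matter how they are ordered''. Lemma~\ref{lem:commu} only lets left isoclinic factors pass right isoclinic ones. It cannot reorder the $L_i$ among themselves or the $R_j$ among themselves. So $L_1\cdots L_{s+2a}R_1\cdots R_{2d-s-2a}$ can only be rewritten as a shuffle that preserves both internal orders.

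For a counterexample, take $d=s=2$, $a=0$ and $q_1\neq q_2$. Suppose Algorithm~\ref{alg:quaternion} returns norms in the order $(q_1,q_2)$ for $\mathcal{L}$ and $(q_2,q_1)$ for $\mathcal{R}$; it may, since $\rho$ is an arbitrary choice. None of the six shuffles of $L_1L_2$ with $R_1R_2$ splits into two adjacent pairs of equal norm. An adjacent pair with norms $\nu_1\neq\nu_2$ has $P_kP_k^\top=\nu_1\nu_2I_4$, and $\nu_1\nu_2$ is not a square, so no scalar multiple of $P_k$ is a rational curve on $\operatorname{SO}_4(\mathbb{R})$.

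Hence the shared norms must occur in the same relative order in $\mathcal{L}$ and $\mathcal{R}$. The paper's indexing, with $\nu(t-h_{L,k})=\nu(t-h_{R,k})=q_k$ for $k\le s$, tacitly assumes this. Your third bullet (``an order that makes the product rearrangement valid'') names the requirement but does not secure it.

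The repair uses the freedom you already invoke, once it is justified:
\begin{itemize}
\item If $\mathcal{M}$ is primitive and $\rho\mid\nu(\mathcal{M})$, the remainder $r_1t+r_0$ has $r_1\neq0$. Otherwise $\nu(\mathcal{M})\equiv\nu(r_0)\pmod{\rho}$ forces $r_0=0$ and hence $\rho\mid\mathcal{M}$.
\item The quotient in $\mathcal{M}=\mathcal{M}'(t-h)$ is again primitive.
\end{itemize}
So every sequence of choices of $\rho$ is admissible.

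Run Algorithm~\ref{alg:quaternion} on both $\mathcal{L}$ and $\mathcal{R}$, extracting $q_1,\dots,q_s$ first and in the same order, then each repeated norm twice in a row. Since the algorithm peels off right factors, this gives $L=A_LS_L^{(s)}\cdots S_L^{(1)}$ and $R=A_RS_R^{(s)}\cdots S_R^{(1)}$. Here $S_L^{(k)}$ and $S_R^{(k)}$ come from linear factors of norm $q_k$, and $A_L$, $A_R$ are products of adjacent equal-norm pairs. Commuting then yields $LR=A_LA_R(S_L^{(s)}S_R^{(s)})\cdots(S_L^{(1)}S_R^{(1)})$.

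With this in place, the rest of your argument goes through: the count $2d$, $P_kP_k^\top=q_k^2I_4$, the monic diagonal, and exact degree $2$ via primitivity.

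A minor point: primitivity and monicity of $\mathcal{L},\mathcal{R}$ do not come from Lemma~\ref{lem:keep_multip}. They come from dividing by $\delta_f,\delta_g$ in~\eqref{eq:def_LR} and from the degree structure of $\gamma\in\operatorname{Rat}(4,I_4)$.
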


\begin{remark}
If $\gamma$ is a left isoclinic rational curve, then $\mathscr{F}(\gamma) = (\delta_f/q) I_4$ for some $\delta_f \in \mathbb{R}[t]$, and if $\gamma$ is right isoclinic, then $\mathscr{F}(\gamma) = \gamma$. In either case, Algorithm~\ref{alg:decomp} can be simplified by starting directly from Step~3.
\end{remark}

\section{Factorization into planar rotation curves}\label{sec:planar-rotation-decomp}

In the previous section, we show that any rational curve on $\operatorname{SO}_4(\mathbb{R})$ admits a factorization into a product of quadratic rational curves. This is connected to \emph{Kempe factorizations}, where a motion polynomial (a
rational curve on the Study quadric) is decomposed into linear factors. The  purpose of this section is to refine the factorization in Theorem~\ref{thm:decomp} by planar rotations defined as follows.

\begin{definition}\label{def:planar_rotation}
A quadratic rational curve $\gamma$ on $\operatorname{SO}_4(\mathbb{R})$ is called a planar rotation curve if there is a $2$-dimensional subspace $\mathbb{V} \subseteq \mathbb{R}^4$ such that $\gamma(t)|_{\mathbb{V}}$ is the identity map on $\mathbb{V}$ for any $t \in \mathbb{R}$. 
\end{definition}

Planar rotation curves are the geometrically primitive simple $4$-dimensional rotations. Indeed, we prove below that each quadratic rational curve in $\operatorname{Rat}(4,I_4)$ is a product of at most two planar rotation curves. Before proceeding, we recall that $\mathbb{H}_1$ is the group consisting of all unit quaternions, and we record an elementary fact that will be needed in the sequel.

\begin{lemma}\label{lem:elemfact}
If $h \in \mathbb{H}_1$ satisfies $h^2 = -1$, then there exists $a \in \mathbb{H}_1$ such that $a h\Cj{a}=\qi$.
\end{lemma}

\begin{proof}
Write $h=a_0+v$, where $a_0\in\mathbb R$ and $v$ is purely imaginary. Then $h^2=(a_0^2-\|v\|^2)+2a_0v$. Since $h^2=-1$, the imaginary part gives $2a_0v=0$. As $h$ is a unit quaternion and $h^2=-1$, we have $v\ne0$, hence $a_0=0$. Thus $h$ is purely imaginary, and we may write $h =x\qi+y\qj+z\qk$ for some $x,y,z\in \mathbb{R}$ with $x^2+y^2+z^2=1$. If $x =-1$, then $a=\qk$ is the desired unit quaternion. Otherwise, we take $a\coloneqq (h+\qi)/\sqrt{2+2x}$. A direct calculation yields $a h\Cj{a}=\qi$ and this completes the proof.
\end{proof}
In fact, Lemma~\ref{lem:elemfact} shows that the conjugation action of $\mathbb{H}_1$ on the space of purely imaginary unit quaternions is transitive. 

A closer investigation of the proof of Theorem~\ref{thm:decomp} reveals exactly three types of quadratic factors appearing in the factorization of a curve in $\operatorname{Rat}(4,I_4)$, which arise from three distinct pairings: left-right pairing, left-left pairing, and right-right pairing. Furthermore, given a quadratic rational curve $\alpha= A/q\in \operatorname{Rat}(4,I_4)$, we may, up to a linear reparametrization, assume that $q=t^2+1$. Since such a reparametrization leaves the factorization problem unchanged, it suffices to restrict our analysis to this normalized form. Consequently, we have $\alpha = A/(t^2 + 1)$ where $A$ is of one of the following forms:
\[
\psi_L(t-h_1)\,\psi_R(t-h_2),\qquad \psi_L\bigl( (t-h_1)(t-h_2)\bigr),\qquad \psi_R\bigl( (t-h_1)(t-h_2)\bigr).
\]
Here $\psi_L$, $\psi_R$ are defined by~\eqref{psi_LR}, and $h_1, h_2 \in \mathbb{H}$ satisfy $h_1^2 = h_2^2 = -1$. Next, we prove that any such quadratic curve factors as a product of at most two planar rotation curves, which generalizes Theorem~\ref{thm:Cayley}.

\begin{lemma}\label{lem:mid_plane}
If $A = \psi_L(t-h_1)\,\psi_R(t-h_2)$ for some $h_1, h_2 \in \mathbb{H}$ with $h_1^2=h_2^2=-1$, then there exists $U \in \operatorname{SO}_4(\mathbb{R})$ such that 
\[
\alpha = \frac{A}{t^2 + 1} = U^\top \begin{pmatrix}  
1 & 0 & 0 & 0\\
0 & 1 & 0 & 0\\
0 & 0 & \frac{t^2-1}{t^2+1} & \frac{2t}{t^2+1} \\
0 & 0 & -\frac{2t}{t^2+1} & \frac{t^2-1}{t^2+1}
\end{pmatrix} U.
\]
\end{lemma}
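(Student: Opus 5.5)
Since $h_1^2=h_2^2=-1$, the proof of Lemma~\ref{lem:elemfact} shows that $h_1,h_2$ are purely imaginary unit quaternions. The plan is to simultaneously conjugate both to a standard position, and then compute the resulting normal form directly.

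\textbf{Step 1: the normal form.} Take $h_1=h_2=\qi$. Then
\[
A_0=\psi_L(t-\qi)\psi_R(t-\qi)=(tI_4-M_1)(tI_4-N_1)=t^2I_4-t(M_1+N_1)+M_1N_1 .
\]
From \eqref{eq:MN}:
\begin{itemize}
\item $M_1+N_1$ vanishes on the first $2\times2$ block and equals $2\begin{pmatrix}0&-1\\1&0\end{pmatrix}$ on the second block.
\item $M_1N_1=\operatorname{diag}(1,1,-1,-1)$.
\end{itemize}
Hence $A_0/(t^2+1)$ is the identity on $e_1,e_2$ and equals $\frac{1}{t^2+1}\begin{pmatrix}t^2-1&2t\\-2t&t^2-1\end{pmatrix}$ on $e_3,e_4$. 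This is exactly the displayed matrix, so the case $h_1=h_2=\qi$ holds with $U=I_4$.

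\textbf{Step 2: the general case.} By Lemma~\ref{lem:elemfact}, choose $a,b\in\mathbb{H}_1$ with $ah_1\Cj a=\qi$ and $bh_2\Cj b=\qi$. Define $U:=\psi_L(a)\psi_R(\Cj b)$, i.e.\ the map $x\mapsto ax\Cj b$; this lies in $\operatorname{SO}_4(\R)$ as a product of isoclinic matrices. By Lemma~\ref{lem:keep_multip} (applied to constant quaternions),
\[
\psi_L(a)\psi_L(t-h_1)\psi_L(\Cj a)=\psi_L(t-\qi),
\]
and the analogous identity holds on the right side for $b$. By Lemma~\ref{lem:commu}, left and right isoclinic matrices commute, so
\[
UAU^\top=\psi_L(a)\psi_L(t-h_1)\psi_L(\Cj a)\cdot\psi_R(\Cj b)\psi_R(t-h_2)\psi_R(b)=A_0 .
\]
Therefore $\alpha=U^\top (A_0/(t^2+1))U$.

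\textbf{Main obstacle: the order convention for $\psi_R$.} Right multiplication reverses order, $R_{pq}=R_qR_p$. We must check whether $\varphi_R$ is multiplicative in the stated order or is an anti-homomorphism. Table~\ref{tab:multiplication} says the $N_i$ obey the same rules as $\qi,\qj,\qk$, so $\psi_R$ is multiplicative as Lemma~\ref{lem:keep_multip} asserts. The one thing to get right is which of $b$ and $\Cj b$ is placed on which side so that the conjugation yields $\psi_R(t-\qi)$ and not $\psi_R(t+\qi)$. If the sign comes out wrong, replace $b$ by a unit quaternion $b'$ with $b'h_2\Cj{b'}=-\qi$, which exists by the transitivity noted after Lemma~\ref{lem:elemfact}.
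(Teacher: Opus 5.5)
Your strategy is the same as the paper's: normalize $h_1,h_2$ to $\qi$ via Lemma~\ref{lem:elemfact}, commute the left and right isoclinic factors, and compute $\psi_L(t-\qi)\psi_R(t-\qi)$. Your Step~1 computation is correct. Step~2, however, is false as written.

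You rightly use that $\psi_R$ is multiplicative in the given order. So your right-hand factor is $\psi_R(\Cj b)\psi_R(t-h_2)\psi_R(b)=\psi_R\bigl(\Cj b(t-h_2)b\bigr)=\psi_R(t-\Cj b h_2 b)$. The choice $bh_2\Cj b=\qi$ does not give $\Cj b h_2 b=\qi$. Concretely, take $h_2=\qj$ and $b=(-1+\qi+\qj+\qk)/2$. Then $bh_2\Cj b=\qi$ but $\Cj b h_2 b=\qk$, so your $U$ yields $UAU^\top=\psi_L(t-\qi)\psi_R(t-\qk)\neq A_0$.

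The slip comes from reading $U$ as $x\mapsto ax\Cj b$. With the matrices actually displayed in \eqref{eq:isoclinic}--\eqref{eq:MN}, $\psi_R(q)$ represents $x\mapsto x\Cj q$: its first column is $\Cj q$, and $N_1$ is minus the matrix of $x\mapsto x\qi$. This is exactly why $\psi_R$ is a homomorphism rather than an anti-homomorphism, and it means your $U$ is $x\mapsto axb$. The discrepancy is a genuine rotation of $h_2$, not a sign $\pm\qi$. So your proposed fallback, choosing $b'$ with $b'h_2\Cj{b'}=-\qi$, does not repair it.

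The fix is to put the conjugating quaternion on the other side. Take $c\in\mathbb{H}_1$ with $ch_2\Cj c=\qi$ and set $U=\psi_L(a)\psi_R(c)$, i.e.\ use your formula with $b=\Cj c$. Then $U^\top=\psi_L(\Cj a)\psi_R(\Cj c)$. By commutativity and multiplicativity, $UAU^\top=\psi_L\bigl(a(t-h_1)\Cj a\bigr)\,\psi_R\bigl(c(t-h_2)\Cj c\bigr)=\psi_L(t-\qi)\psi_R(t-\qi)=A_0$. This is precisely the paper's choice $U=P_1P_2=\psi_L(a_1)\psi_R(a_2)$.

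Incidentally, your formula would happen to work with the specific $b$ built in the proof of Lemma~\ref{lem:elemfact}, namely $(h_2+\qi)/\sqrt{2+2x}$ or $\qk$. That $b$ is purely imaginary, so $\Cj b=-b$ and $\Cj b h_2 b=bh_2\Cj b$. But your argument invokes only the existence statement of the lemma, and for a general such $b$ the claimed identity fails.
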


\begin{proof}
By Lemma~\ref{lem:elemfact}, there exist unit quaternions $a_1, a_2$ such that $a_1 h_1 \Cj{a}_1 = a_2 h_2 \Cj{a}_2 = \qi$. Then we have
\begin{align*}
\psi_L(a_1)\,A\,\psi_R(\Cj{a}_2) 
& =\psi_L\bigl(a_1(t-h_1) \bigr) \, \psi_R\bigl((t-h_2)\Cj{a}_2\bigr) \\
& =  \psi_L\bigl((t-a_1h_1\Cj{a}_1) a_1 \bigr) \, \psi_R\bigl(\Cj{a}_2(t-a_2h_2\Cj{a}_2)\bigr)\\
&=\psi_L(t- \qi) \,  \psi_L(a_1) \psi_R(\Cj{a}_2) \, \psi_R(t-\qi ).
\end{align*}
Denote $P_1 \coloneqq \psi_L(a_1)$ and $P_2 \coloneqq \psi_R(a_2)$. Then Lemma~\ref{lem:commu} implies 
\[
P_1 A P_2^\top =  \psi_L(t- \qi) P_1 P_2^\top \psi_R(t-\qi ) = P_2^\top (\psi_L(t- \qi) \psi_R(t-\qi )) P_1.
\]
A direct computation gives
\[
\psi_L(t- \qi) \psi_R(t-\qi ) = \begin{pmatrix}  
t^2+1 & 0 & 0 & 0\\
0 & t^2+1 & 0 & 0\\
0 & 0 & t^2-1 & 2t \\
0 & 0 & -2t & t^2-1
\end{pmatrix},
\]
from which we may take $U \coloneqq P_1 P_2$. 
\end{proof}
Lemma~\ref{lem:mid_plane} can be proved by a different method, and we refer the interested reader to \cite{Liye2024rational} for more details.

\begin{lemma}\label{lem:left_plane}
If $A = \psi_L\bigl((t-h_1)(t-h_2)\bigr)$ for some $h_1, h_2 \in \mathbb{H}$ with $h_1^2=h_2^2=-1$, then there exist $U,V \in \operatorname{SO}_4(\mathbb{R})$ such that 
\[
\alpha = \frac{A}{t^2 + 1} = \left[U^\top \begin{pmatrix}  
1 & 0 & 0 & 0\\
0 & 1 & 0 & 0\\
0 & 0 & \frac{t^2-1}{t^2+1} & \frac{2t}{t^2+1} \\
0 & 0 & -\frac{2t}{t^2+1} & \frac{t^2-1}{t^2+1}
\end{pmatrix}  U \right]  \left[V^\top \begin{pmatrix}  
\frac{t^2-1}{t^2+1} & \frac{2t}{t^2+1}  & 0 & 0\\
-\frac{2t}{t^2+1} & \frac{t^2-1}{t^2+1} & 0 & 0\\
0 & 0 & 1 & 0 \\
0 & 0 & 0 & 1
\end{pmatrix} V  \right].
\]
\end{lemma}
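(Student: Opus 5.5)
The plan is to split the left--left factor into two left--right factors by inserting a right isoclinic factor together with its inverse, and then apply Lemma~\ref{lem:mid_plane} to each half.

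\textbf{Step 1 (insert an identity).} Fix any $k\in\mathbb{H}$ with $k^2=-1$; for instance $k=\qi$. Then $-k$ also satisfies $(-k)^2=-1$, and $\Cj{k}=-k$. In $\mathbb{H}[t]$ this gives
\[
(t-k)(t+k)=(t-k)(t-\Cj{k})=\QNorm{t-k}=t^2+1 .
\]
Since $\psi_R$ is an algebra isomorphism (Lemma~\ref{lem:keep_multip}), it follows that $\psi_R(t-k)\,\psi_R(t+k)=(t^2+1)I_4$. Inserting this product between the two left factors yields
\[
\alpha=\frac{\psi_L(t-h_1)\,\psi_L(t-h_2)}{t^2+1}
=\frac{\psi_L(t-h_1)\,\psi_R(t-k)}{t^2+1}\cdot\frac{\psi_R(t+k)\,\psi_L(t-h_2)}{t^2+1}.
\]
By the commutativity of left and right isoclinic polynomial matrices (the computation in Lemma~\ref{lem:commu}), the second factor equals $\psi_L(t-h_2)\,\psi_R(t-(-k))/(t^2+1)$.

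\textbf{Step 2 (apply Lemma~\ref{lem:mid_plane} twice).} The first factor has the form treated in Lemma~\ref{lem:mid_plane}, with $(h_1,k)$ in place of $(h_1,h_2)$. Hence it equals $U^\top D_{34}(t)\,U$, where $D_{34}(t)$ is the first displayed matrix and $U\in\operatorname{SO}_4(\mathbb{R})$.

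The second factor has the same form, with $(h_2,-k)$ in place of $(h_1,h_2)$. Hence it equals $W^\top D_{34}(t)\,W$ for some $W\in\operatorname{SO}_4(\mathbb{R})$.

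\textbf{Step 3 (move the rotation block to the first plane).} Let $S$ be the permutation matrix of $(1\,3)(2\,4)$. This permutation is even, so $S\in\operatorname{SO}_4(\mathbb{R})$. Conjugating by $S$ exchanges the coordinate planes $\langle e_1,e_2\rangle$ and $\langle e_3,e_4\rangle$. The $2\times 2$ block $\begin{pmatrix}\frac{t^2-1}{t^2+1}&\frac{2t}{t^2+1}\\-\frac{2t}{t^2+1}&\frac{t^2-1}{t^2+1}\end{pmatrix}$ is moved without change, since $e_3\mapsto e_1$ and $e_4\mapsto e_2$ preserve the order of the coordinates. Therefore $S^\top D_{34}(t)\,S=D_{12}(t)$, the second displayed matrix, and also $D_{34}(t)=S\,D_{12}(t)\,S^\top$. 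Setting $V\coloneqq S^\top W\in\operatorname{SO}_4(\mathbb{R})$ gives
\[
W^\top D_{34}(t)\,W=V^\top D_{12}(t)\,V,
\]
which is exactly the claimed factorization.

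\textbf{Expected obstacle.} No real obstacle is expected; the work is bookkeeping. The two points to check are these:
\begin{itemize}
\item Each bracket is a genuine quadratic rational curve with denominator $t^2+1$. This is automatic, because Lemma~\ref{lem:mid_plane} identifies each bracket with an explicit orthogonal conjugate of $D_{34}$.
\item The signs in the $2t$ entries and the orientation of $S$ match the statement. If $S$ happened to flip the orientation of the block, I would instead use the permutation $(1\,3\,2\,4)$ composed with a suitable sign change to keep $\det=1$. However, $(1\,3)(2\,4)$ already works.
\end{itemize}
Each bracket fixes a $2$-plane pointwise, so it is also a planar rotation curve in the sense of Definition~\ref{def:planar_rotation}.
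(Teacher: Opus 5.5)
Your proof is correct and follows essentially the same route as the paper: both insert $\psi_R(t-\qi)\,\psi_R(t+\qi)=(t^2+1)I_4$ between the two left linear factors and commute it to obtain two left--right products. The only difference is bookkeeping. You invoke Lemma~\ref{lem:mid_plane} twice and then relocate the rotation block with the even permutation $(1\,3)(2\,4)$, whereas the paper first conjugates $h_1$ to $\qi$ and computes $\psi_L(t-\qi)\,\psi_R(t+\qi)$ directly, which lands the block in the $\langle e_1,e_2\rangle$ plane without a permutation.
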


\begin{proof}
By Lemma~\ref{lem:elemfact}, there exists $a \in \mathbb{H}_1$ such that $a h_1 \Cj{a} = \qi$. Denote $h_3 \coloneqq a h_2 \Cj{a}$. We choose $b\in \mathbb{H}_1$ such that $b h_3 \Cj{b} = \qi$. Then
\[
\psi_L\bigl(a (t-h_1) (t-h_2) \Cj{a} \bigr) =  \psi_L\bigl((t- a h_1 \Cj{a})(t-a h_2\Cj{a} )\bigr) =  \psi_L\bigl((t- \qi)  (t-h_3 )\bigr).
\]
Let $U \coloneqq \psi_L(a)$ and $V \coloneqq \psi_L(b)^\top \psi_L(a)$. Note that $(t^2+1) I_4 = \psi_R(t^2+1)= \psi_R\bigl((t+ \qi)(t - \qi)\bigr)$. We deduce 
\[
(t^2 + 1)U A U^\top = \psi_R(t^2+1)\, \psi_L(t - \qi) \, \psi_L(t-h_3) =  \big[ \psi_L(t- \qi) \psi_R(t - \qi) \big] \big[ \psi_R(t + \qi) \psi_L(t-h_3) \big].
\]
Since $b h_3 \Cj{b} = \qi$, we have 
\[
\psi_R(t + \qi) \psi_L(t-h_3) = \psi_L(b) \big[ \psi_L(t- \qi) \psi_R(t + \qi) \big] \psi_L(b)^\top = UV^\top \big[ \psi_L(t- \qi) \psi_R(t + \qi) \big] VU^\top.
\]
Moreover, a direct computation implies 
\[
\psi_L(t- \qi) \psi_R(t - \qi) = \begin{pmatrix}  
t^2+1 & 0 & 0 & 0\\
0 & t^2+1 & 0 & 0\\
0 & 0 & t^2-1  & 2t \\
0 & 0 & -2t & t^2-1
\end{pmatrix},\; \psi_L(t- \qi) \psi_R(t + \qi) = \begin{pmatrix}  
t^2-1  & 2t  & 0 & 0\\
-2t & t^2-1 & 0 & 0\\
0 & 0 & t^2+1 & 0 \\
0 & 0 & 0 & t^2+1
\end{pmatrix},
\]
from which we obtain a desired decomposition of $A/(t^2 + 1)$.
\end{proof}

By the same argument, we arrive at the following characterization of quadratic curves arising from the right-right pairing.

\begin{lemma}\label{lem:right_plane}
If $A = \psi_R\bigl((t-h_1)(t-h_2)\bigr)$ for some $h_1, h_2 \in \mathbb{H}$ with $h_1^2=h_2^2=-1$, then there exist $U,V \in \operatorname{SO}_4(\mathbb{R})$ such that 
\[
\alpha = \frac{A}{t^2 + 1} = \left[U^\top \begin{pmatrix}  
1 & 0 & 0 & 0\\
0 & 1 & 0 & 0\\
0 & 0 & \frac{t^2-1}{t^2+1} & \frac{2t}{t^2+1} \\
0 & 0 & -\frac{2t}{t^2+1} & \frac{t^2-1}{t^2+1}
\end{pmatrix}  U \right]  \left[V^\top \begin{pmatrix}  
\frac{t^2-1}{t^2+1} & -\frac{2t}{t^2+1}  & 0 & 0\\
\frac{2t}{t^2+1} & \frac{t^2-1}{t^2+1} & 0 & 0\\
0 & 0 & 1 & 0 \\
0 & 0 & 0 & 1
\end{pmatrix} V  \right].
\]
\end{lemma}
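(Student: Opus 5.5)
I would mirror the proof of Lemma~\ref{lem:left_plane}, swapping the roles of $\psi_L$ and $\psi_R$. The starting point is Lemma~\ref{lem:elemfact}, which gives $a\in\mathbb{H}_1$ with $\Cj{a}h_2a=\qi$. To obtain this, apply the lemma to $h_2$ to get $c$ with $ch_2\Cj{c}=\qi$ and set $a=\Cj{c}$.

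Since $\psi_R$ is an algebra isomorphism onto its image (Lemma~\ref{lem:keep_multip}), we have
\[
\psi_R(\Cj{a})\,A\,\psi_R(a)=\psi_R\bigl((t-h_3)(t-\qi)\bigr),\qquad h_3\coloneqq \Cj{a}h_1a.
\]
Conjugating by $\psi_R(\Cj a)$ or by $\psi_R(a)$ are the two possible choices; the one above is arranged so that $t-\qi$ sits in the right factor. The ordering matters, because the target product has the fixed-$e_1e_2$ rotation as the \emph{first} factor and the fixed-$e_3e_4$ rotation second.

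Next write $(t^2+1)I_4=\psi_L\bigl((t+\qi)(t-\qi)\bigr)$, so that
\[
(t^2+1)\,\psi_R(t-h_3)\psi_R(t-\qi)=\bigl[\psi_R(t-h_3)\psi_L(t+\qi)\bigr]\bigl[\psi_L(t-\qi)\psi_R(t-\qi)\bigr].
\]
This rearrangement uses Lemma~\ref{lem:commu}. The second bracket is the matrix $\operatorname{diag}(t^2+1,t^2+1)\oplus\bigl(\begin{smallmatrix}t^2-1&2t\\-2t&t^2-1\end{smallmatrix}\bigr)$ computed in Lemma~\ref{lem:mid_plane}.

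For the first bracket, choose $b\in\mathbb{H}_1$ with $\Cj{b}h_3b=\qi$, again via Lemma~\ref{lem:elemfact}. Then $\psi_R(t-h_3)=\psi_R(\Cj b)^{-1}\ldots$, or more concretely
\[
\psi_R(t-h_3)=\psi_R(b)\,\psi_R(t-\qi)\,\psi_R(\Cj b),
\]
taking care with the order since $\psi_R$ is multiplicative. The matrix $\psi_R(b)$ commutes with $\psi_L(t+\qi)$. Hence the first bracket equals $\psi_R(b)\bigl[\psi_L(t+\qi)\psi_R(t-\qi)\bigr]\psi_R(b)^\top$.

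A direct computation with \eqref{eq:isoclinic} shows that $\psi_L(t+\qi)\psi_R(t-\qi)$ is $\bigl(\begin{smallmatrix}t^2-1&-2t\\2t&t^2-1\end{smallmatrix}\bigr)\oplus\operatorname{diag}(t^2+1,t^2+1)$. Since $L_{\qi}$ and $R_{-\qi}$ agree on the first block and cancel on the second, the sign of the $2t$ entries flips relative to Lemma~\ref{lem:left_plane}, which matches the statement.

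Dividing by $(t^2+1)^2$ and then undoing the initial conjugation gives
\[
\alpha=\Bigl[W\psi_R(b)\,\Theta_2\,\psi_R(b)^\top W^\top\Bigr]\Bigl[W\,\Theta_1\,W^\top\Bigr],\qquad W\coloneqq\psi_R(a).
\]
Here $\Theta_1$ denotes the block fixing $e_1,e_2$ and $\Theta_2$ the block fixing $e_3,e_4$. This is in the wrong order: the statement wants the $e_1e_2$-fixed factor first. This ordering issue is the main obstacle I expect.

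My first attempt to fix it is to put $t-\qi$ on the left instead. That is, normalize $h_1$ to $\qi$ as in Lemma~\ref{lem:left_plane}, obtaining $\psi_R\bigl((t-\qi)(t-h_3)\bigr)$. Then insert $(t^2+1)=\psi_L((t-\qi)(t+\qi))$ as $\psi_R(t-\qi)\psi_L(t-\qi)\cdot\psi_L(t+\qi)\psi_R(t-h_3)$, so that the $e_1e_2$-fixed factor comes first.

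Then set $U\coloneqq\psi_R(a)^\top$-type and $V\coloneqq \psi_R(b)^\top\psi_R(a)^\top$-type, with the transposes fixed by matching $U^\top(\cdot)U$. All of these lie in $\operatorname{SO}_4(\mathbb{R})$ because they are isoclinic. The remaining work is a routine check of transposes and signs.
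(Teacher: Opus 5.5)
Your final route is correct and is exactly the mirror of the proof of Lemma~\ref{lem:left_plane} that the paper invokes with ``by the same argument'': normalize $h_1$ to $\qi$ by conjugating with $\psi_R(a)$, write $(t^2+1)\,\psi_R\bigl((t-\qi)(t-h_3)\bigr)=\bigl[\psi_R(t-\qi)\psi_L(t-\qi)\bigr]\bigl[\psi_L(t+\qi)\psi_R(t-h_3)\bigr]$, then conjugate the second bracket by $\psi_R(b)$ to reach $\psi_L(t+\qi)\psi_R(t-\qi)$, whose sign pattern you computed correctly. The ordering you flagged as the main obstacle is harmless: the antidiagonal permutation matrix $J$ lies in $\operatorname{SO}_4(\mathbb{R})$ and satisfies $J\Theta_2J=\Theta_1$, so your first attempt already proves the statement once both conjugating matrices are multiplied on the left by $J$.
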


Lemmas~\ref{lem:mid_plane}--\ref{lem:right_plane} show that a quadratic rational curve in $\operatorname{Rat}(4,I_4)$ is either itself a planar rotation curve, or a product of two planar rotation curves. This factorization, however, is not unique. For instance, in the proof of Lemma~\ref{lem:left_plane}, any $f \in \mathbb{H}[t]$ with $\psi_R(f) = (t^2 + 1)I_4$ may be used to yield an alternative factorization. As an immediate consequence of Theorem~\ref{thm:decomp} and Lemmas~\ref{lem:mid_plane}--\ref{lem:right_plane}, we obtain the following. 

\begin{theorem}\label{thm:futherdecomp}
Every rational curve in $\operatorname{Rat}(4,I_4)$ of degree $2d$ admits a factorization into a product of at most $2d$ planar rotation curves.
\end{theorem}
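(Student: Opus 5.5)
The plan is to combine Theorem~\ref{thm:decomp} with Lemmas~\ref{lem:mid_plane}--\ref{lem:right_plane}, checking carefully that the normalizations made before those lemmas are legitimate. Let $\gamma\in\operatorname{Rat}(4,I_4)$ have degree $2d$. By Theorem~\ref{thm:decomp}, and more precisely its proof, we write $\gamma=\gamma_1\cdots\gamma_d$. Each $\gamma_k=P_k/q_k$ has $q_k$ a monic irreducible real quadratic, and $P_k$ is one of $L_iR_i$, $L_{s+2i-1}L_{s+2i}$ or $R_{s+2j-1}R_{s+2j}$. Each of these is the image under $\psi_L,\psi_R$ of linear quaternion factors whose norm polynomials equal $q_k$. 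It therefore suffices to show that each $\gamma_k$ is a product of at most two planar rotation curves, since then $\gamma$ is a product of at most $2d$ of them.

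First I will normalize a single factor. Write $q_k=(t-\mu)^2+\nu^2$ with $\nu>0$ and substitute $t=\mu+\nu s$. A linear factor $t-h$ with $\QNorm{t-h}=q_k$ has $h=\mu+\nu h'$, where $h'$ is a unit purely imaginary quaternion, so $h'^2=-1$. Then $t-h=\nu(s-h')$ and $q_k=\nu^2(s^2+1)$. The factors $\nu$ cancel between numerator and denominator, because $\psi_L,\psi_R$ are $\mathbb R$-linear and $\nu^2$ appears on both sides. Thus $\gamma_k(\mu+\nu s)=A(s)/(s^2+1)$, where $A$ has exactly one of the three forms $\psi_L(s-h_1)\psi_R(s-h_2)$, $\psi_L((s-h_1)(s-h_2))$ or $\psi_R((s-h_1)(s-h_2))$, with $h_1^2=h_2^2=-1$. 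In the left--left and right--right cases I use Lemma~\ref{lem:keep_multip} to pass from a product of matrices to $\psi$ of a product.

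Next I apply the appropriate lemma. Lemma~\ref{lem:mid_plane} expresses the first type as a conjugate $U^\top D(s)U$, where $D(s)$ is the identity on the plane spanned by $e_1,e_2$ and rotates the plane spanned by $e_3,e_4$. Hence $\gamma_k(t)$ fixes the plane $U^\top\langle e_1,e_2\rangle$ pointwise for all $t$. Lemmas~\ref{lem:left_plane} and \ref{lem:right_plane} write the other two types as a product of two such conjugates, one fixing $U^\top\langle e_1,e_2\rangle$ and the other fixing $V^\top\langle e_3,e_4\rangle$.

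The remaining point, which I expect to be the only real obstacle and which is mostly bookkeeping, is to check that each factor is a genuine planar rotation curve in the sense of Definition~\ref{def:planar_rotation}: it must be quadratic and rational. Undoing the substitution $s=(t-\mu)/\nu$ gives entries with denominator $s^2+1=q_k/\nu^2$. So each factor is $U^\top D((t-\mu)/\nu)U$, which has denominator $q_k$ and no real poles. It lies in $\operatorname{SO}_4(\mathbb R)$ because it is a conjugate of a rotation. It has degree exactly $2$ because the entries $(s^2-1)/(s^2+1)$ and $2s/(s^2+1)$ are in lowest terms, so the gcd condition holds. Its limit at infinity is $I_4$, so it even lies in $\operatorname{Rat}(4,I_4)$. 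Concatenating these factorizations over $k=1,\dots,d$ gives at most $2d$ planar rotation curves, with exactly $d$ exactly when all pairings are left--right.
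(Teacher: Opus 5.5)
Your proposal is correct and follows the paper's own route. The paper proves the theorem as an immediate consequence of Theorem~\ref{thm:decomp} and Lemmas~\ref{lem:mid_plane}--\ref{lem:right_plane}, applied to each quadratic factor after the linear reparametrization normalizing its denominator to $t^2+1$; your added bookkeeping (undoing the substitution $s=(t-\mu)/\nu$ and checking that each conjugated factor is a genuine degree-two planar rotation curve) spells out what the paper leaves implicit.
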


We give an explicit algorithm for Theorem~\ref{thm:futherdecomp}.

\begin{algorithm}[H]
\caption{Factorization algorithm for rational curves on $\operatorname{SO}_4(\mathbb{R})$ into planar rotation curves}
\label{alg:plane_rotation_decomp}
\begin{algorithmic}[1]

\Require A rational curve $\gamma = P/q \in \operatorname{Rat}(4,I_4)$ of degree $2d$.
\Ensure A list of at most $2d$ planar rotation curves on $\operatorname{SO}_4(\mathbb{R})$ 
with product $\gamma$.

\State $\text{Result} \gets [\,]$ \Comment{Empty list.}

\State Decompose $\gamma$ into $d$ quadratic rational curves $\gamma_k  = P_k /q_k $  (Algorithm~\ref{alg:decomp}).

\For{$k = 1$ to $d$}
    \If{$P_k  = \psi_L(t-h_1)\,\psi_R(t-h_2)$ with \(\QNorm{t-h_i}=q_k \) for \(i=1,2\)}
        \Comment{(Lemma~\ref{lem:mid_plane})}
        \State $\text{Result} \gets [\text{Result}, \gamma_k]$

    \ElsIf{$P_k  = \psi_L\bigl((t-h_1)(t-h_2)\bigr)$ with \(\QNorm{t-h_i}=q_k \) for \(i=1,2\)}
        \Comment{(Lemma~\ref{lem:left_plane})}
        \State $P_{k_1}  \gets \psi_L(t-h_1)\psi_R(t-h_1)$, $\gamma_{k_1} \gets P_{k_1} /q_k $

        \State $P_{k_2}  \gets \psi_R(t-\Cj{h}_1)\psi_L(t-h_2)$, $\gamma_{k_2} \gets P_{k_2} /q_k $

        \State $\text{Result} \gets [\text{Result}, \gamma_{k_1}, \gamma_{k_2}]$

    \ElsIf{$P_k  = \psi_R\bigl((t-h_1)(t-h_2)\bigr)$ with \(\QNorm{t-h_i}=q_k \) for \(i=1,2\)}
        \Comment{(Lemma~\ref{lem:right_plane})}
        \State $P_{k_1}  \gets \psi_R(t-h_1)\psi_L(t-h_1)$, $\gamma_{k_1} \gets P_{k_1} /q_k $

        \State $P_{k_2}  \gets \psi_L(t-\Cj{h}_1)\psi_R(t-h_2)$, $\gamma_{k_2} \gets P_{k_2} /q_k $

        \State $\text{Result} \gets [\text{Result}, \gamma_{k_1}, \gamma_{k_2}]$

    \EndIf
\EndFor

\State \Return Result 
\end{algorithmic}
\end{algorithm}

\begin{corollary}
Algorithm~\ref{alg:plane_rotation_decomp} factorizes any $\gamma \in \operatorname{Rat}(4,I_4)$ of degree $2d$ into at most $2d$ planar rotation curves.
\end{corollary}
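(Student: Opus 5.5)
The statement is the correctness of Algorithm~\ref{alg:plane_rotation_decomp}: it must return at most $2d$ planar rotation curves whose product is $\gamma$. I would argue in three stages.

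\textbf{Stage 1 (quadratic factorization).} By the Corollary following Algorithm~\ref{alg:decomp}, Step~2 produces $\gamma=\gamma_1\cdots\gamma_d$ with $\gamma_k=P_k/q_k$. Each $\gamma_k$ is quadratic, and each $P_k$ is of exactly one of three forms, coming from the three pairings in the proof of Theorem~\ref{thm:decomp}:
\[
\psi_L(t-h_1)\psi_R(t-h_2), \qquad \psi_L\bigl((t-h_1)(t-h_2)\bigr), \qquad \psi_R\bigl((t-h_1)(t-h_2)\bigr),
\]
where $\QNorm{t-h_i}=q_k$. Hence the three branches of the loop are exhaustive. It then suffices to show two things. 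First, each branch appends factors whose product is $\gamma_k$. Second, each appended factor is a planar rotation curve. Since a branch appends at most two factors, the count is at most $2d$.

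\textbf{Stage 2 (left–right branch).} I would reduce to $q_k=t^2+1$ by an affine reparametrization $t\mapsto at+b$ with $a\neq 0$. This preserves rationality, degree, and the planar-rotation property, and it turns $h_i$ into quaternions with $h_i^2=-1$. In this normalized setting, Lemma~\ref{lem:mid_plane} shows that $\gamma_k$ is conjugate by a fixed $U$ to the standard rotation fixing $e_1,e_2$. So $\gamma_k$ fixes the plane $U^\top\langle e_1,e_2\rangle$ pointwise and is itself a planar rotation curve.

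\textbf{Stage 3 (left–left branch).} I would check two things directly in $\mathbb{H}[t]$, using the algebra isomorphisms of Lemma~\ref{lem:keep_multip} and the commutativity of Lemma~\ref{lem:commu}.

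\emph{Product identity.} First, $\psi_R(t-\Cj h_1)=\psi_R(t-h_1)^\top$, because $\psi_R(\Cj x)=\psi_R(x)^\top$ holds coefficientwise. Second, $\psi_R(t-h_1)\psi_R(t-\Cj h_1)=\psi_R\bigl((t-h_1)\Cj{(t-h_1)}\bigr)=q_kI_4$. Therefore
\[
P_{k_1}P_{k_2}=\psi_L(t-h_1)\bigl[\psi_R(t-h_1)\psi_R(t-\Cj h_1)\bigr]\psi_L(t-h_2)=q_k\,\psi_L\bigl((t-h_1)(t-h_2)\bigr)=q_kP_k,
\]
so $\gamma_{k_1}\gamma_{k_2}=\gamma_k$.

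\emph{Planar rotations.} The factor $P_{k_1}$ is of left–right type, so Stage~2 applies to it. The factor $P_{k_2}=\psi_L(t-h_2)\psi_R(t-\Cj h_1)$ (using commutativity) is also of left–right type, because $\Cj h_1$ has norm polynomial $q_k$ and, after normalization, still satisfies $(\Cj h_1)^2=-1$. So Stage~2 applies to it as well.

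The right–right branch is symmetric, with the roles of $\psi_L$ and $\psi_R$ exchanged.

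\textbf{Main obstacle.} The delicate point is the normalization. I need to verify that for a linear factor $t-h$ with $\QNorm{t-h}=q_k=(t-b)^2+a^2$, the substitution $t=at'+b$ gives $t-h=a(t'-h')$ with $h'^2=-1$. This works because $h$ has real part $b$ and imaginary part of norm $a$. I also need to confirm that the orthogonal conjugation in Lemma~\ref{lem:mid_plane} genuinely exhibits a fixed plane, so that Definition~\ref{def:planar_rotation} is met. Both are short computations.
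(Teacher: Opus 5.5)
Your proposal is correct and follows essentially the same route as the paper's proof: the product identity $P_{k_1}P_{k_2}=q_kP_k$ obtained from $\psi_R(t-h_1)\psi_R(t-\Cj{h}_1)=q_kI_4$, Lemma~\ref{lem:mid_plane} applied to both resulting left--right factors, and the right--right case handled symmetrically. The only difference is that you spell out what the paper leaves implicit, namely the affine normalization $q_k\mapsto t^2+1$ (with $h'=v/a$ a unit pure imaginary quaternion) and the fact that the three branches of the algorithm are exhaustive.
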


\begin{proof}
Let $\gamma_k = P_k/q_k$. If $P_k = \psi_L(t-h_1)\,\psi_R(t-h_2)$, then $\gamma_k$ is planar rotation curve directly.  And if $P_k = \psi_L\bigl((t-h_1)(t-h_2)\bigr)$, i.e. $\gamma_k$ is left isoclinic, then 
\[
P_{k_1}P_{k_2} = \psi_L(t-h_1)\psi_R(t-h_1) \cdot \psi_R(t-\Cj{h}_1)\psi_L(t-h_2) = q_k\,\psi_L(t-h_1)\psi_L(t-h_2) = q_k \,P_k,
\]
hence $\gamma_k = \gamma_{k_1}\gamma_{k_2}$, where both factors are planar rotation curves by Lemma~\ref{lem:mid_plane}. The right isoclinic case is analogous. Thus each quadratic factor yields at most two planar rotation curves, and the total number is at most $2d$.
\end{proof}

\section{Examples}\label{sec:example}

The examples in this section illustrate Theorems~\ref{thm:decomp} and~\ref{thm:futherdecomp} through Algorithms~\ref{alg:decomp} and~\ref{alg:plane_rotation_decomp}. The first is a minimal quadratic example, included as a sanity check for the notion of a planar rotation curve. The second follows the full algorithmic pipeline: Cayley's factorization, quaternion factorization, pairing of equal-norm factors, and refinement into planar rotation curves.

\begin{example}[A quadratic planar rotation curve]
Let
\[
\gamma(t)=\frac{1}{t^2+1}
\begin{pmatrix}
t^2+1 & 0 & 0 & 0\\
0 & t^2+1 & 0 & 0\\
0 & 0 & t^2-1 & 2t\\
0 & 0 & -2t & t^2-1
\end{pmatrix}.
\]
Then $\gamma(t)$ is a rational curve on $\operatorname{SO}_4(\mathbb{R})$ of degree $2$. It fixes the $2$-plane spanned by $e_1$ and $e_2$, and rotates its orthogonal complement spanned by $e_3$ and $e_4$. Here $\{e_1,\dots, e_4\}$ is the standard basis of $\mathbb{R}^4$. In particular, Algorithm~\ref{alg:decomp} only returns one quadratic factor, and Algorithm~\ref{alg:plane_rotation_decomp} recognizes it as a single planar rotation factor.
\end{example}

\begin{example}
Let $\gamma=P/q$ be the rational curve on $\operatorname{SO}_4(\mathbb{R})$ of degree $4$, where $q \coloneqq (t^{2} + 4)(t^{2} + 2t + 5)$ and
\[
P \coloneqq \begin{pmatrix}
t^{4} + 2t^{3} + 5t^{2} + 8t + 4 & -2t^{3} - 2t^{2} - 8t - 8 & 4t^{2} + 16 & -2t^{3} - 2t^{2} - 8t - 8 \\
2t^{3} + 10t^{2} - 8 & t^{4} + 2t^{3} - 3t^{2} + 8t - 4 & -2t^{3} + 6t^{2} + 16t + 8 & 4t^{3} + 4t^{2} + 4t + 16 \\
-4t^{2} - 16 & 2t^{3} + 2t^{2} + 8t + 8 & t^{4} + 2t^{3} + 5t^{2} + 8t + 4 & -2t^{3} - 2t^{2} - 8t - 8 \\
2t^{3} - 6t^{2} - 16t - 8 & -4t^{3} - 4t^{2} - 4t - 16 & 2t^{3} + 10t^{2} - 8 & t^{4} + 2t^{3} - 3t^{2} + 8t - 4
\end{pmatrix}.
\]
We apply Algorithm~\ref{alg:decomp} to decompose $\gamma$ into a product of two quadratic rational curves.

\noindent
\textbf{Step 1 \textup{\&} 2.} Compute $\mathscr{F}(\gamma)$, $\mathscr{G}(\gamma)$, and extract common factors $\delta_f$, $\delta_g$ to obtain $L$ and $R$. This gives us 
\[
\delta_f=t^{3} + 2t^{2} + t + 8,\qquad 
\delta_g=(t^{3} + 2t^{2} + t + 8)(t^2+4),
\]
and
\[   
R =
\begin{pmatrix}
t & 0 & 2 & 0 \\
0 & t & 0 & 2 \\
-2 & 0 & t & 0 \\
0 & -2 & 0 & t
\end{pmatrix}, \quad 
L =
\begin{pmatrix}
t^{3} + 2t^{2} + t + 8 & -2t^{2} - 6t - 4 & -2t^{2} - 2 & -2t^{2} + 2t + 4 \\
2t^{2} + 6t + 4 & t^{3} + 2t^{2} + t + 8 & -2t^{2} + 2t + 4 & 2t^{2} + 2 \\
2t^{2} + 2 & 2t^{2} - 2t - 4 & t^{3} + 2t^{2} + t + 8 & -2t^{2} - 6t - 4 \\
2t^{2} - 2t - 4 & -2t^{2} - 2 & 2t^{2} + 6t + 4 & t^{3} + 2t^{2} + t + 8
\end{pmatrix}.
\]

\noindent
\textbf{Step 3.} Convert $L$ and $R$ to quaternion polynomials:
\[
    \mathcal{L} = (t^{3} + 2t^{2} + t + 8) + (2t^{2} + 6t + 4) \qi + (2t^{2} + 2)\qj + (2t^{2} - 2t - 4)\qk,\quad 
    \mathcal{R} = t + 2\qj.
\]

\noindent
\textbf{Step 4.} Factor $\mathcal{L}$ and $\mathcal{R}$ into linear factors using Algorithm~\ref{alg:quaternion}.
\[
\mathcal{L} \coloneqq \varphi_L(L) = (t + 2\qj)(t +1 + 2\qi)(t +1 + 2\qk),\quad \mathcal{R} \coloneqq \varphi_R(R) = t + 2\qj.
\]

\noindent
\textbf{Step 5.} Map back via $\psi_L$ and $\psi_R$ to obtain linear matrix polynomials $L_1,L_2,L_3$ and $R_1$ such that $P = L_1 L_2 L_3 R_1$ where
\begin{align*}
L_1 &= \begin{pmatrix}
t & 0 & -2 & 0 \\
0 & t & 0 & 2 \\
2 & 0 & t & 0 \\
0 & -2 & 0 & t
\end{pmatrix}, \quad
L_2 = \begin{pmatrix}
t + 1 & -2 & 0 & 0 \\
2 & t + 1 & 0 & 0 \\
0 & 0 & t + 1 & -2 \\
0 & 0 & 2 & t + 1
\end{pmatrix}, \\
L_3 &= \begin{pmatrix}
t + 1 & 0 & 0 & -2 \\
0 & t + 1 & -2 & 0 \\
0 & 2 & t + 1 & 0 \\
2 & 0 & 0 & t + 1
\end{pmatrix}, \quad
R_1 = \begin{pmatrix}
t & 0 & 2 & 0 \\
0 & t & 0 & 2 \\
-2 & 0 & t & 0 \\
0 & -2 & 0 & t
\end{pmatrix}.
\end{align*}

\noindent
\textbf{Step 6.} Multiply pairwise to obtain quadratic factors $\gamma_1 \coloneqq P_1/q_1$ and $\gamma_2 \coloneqq P_2/q_2$, where $q_1 =  t^2+4$, $q_2 = t^2+2t+5$ and 
\begin{align*}
  &P_1 \coloneqq  L_1R_1= \begin{pmatrix}
t^{2} + 4 & 0 & 0 & 0 \\
0 & t^{2} - 4 & 0 & 4t \\
0 & 0 & t^{2} + 4 & 0 \\
0 & -4t & 0 & t^{2} - 4
\end{pmatrix}, \\
&P_2 \coloneqq L_2L_3 = \begin{pmatrix}
t^{2} + 2t + 1 & -2t - 2 & 4 & -2t - 2 \\
2t + 2 & t^{2} + 2t + 1 & -2t - 2 & -4 \\
-4 & 2t + 2 & t^{2} + 2t + 1 & -2t - 2 \\
2t + 2 & 4 & 2t + 2 & t^{2} + 2t + 1
\end{pmatrix}.
\end{align*}
 
Then the factorization of $\gamma$ into quadratic rational curves is given by $\gamma = \gamma_1 \gamma_2$. Next, we apply Algorithm~\ref{alg:plane_rotation_decomp} to decompose $\gamma_1, \gamma_2$ into planar rotation curves.

\noindent
\textbf{Step 1.} For $\gamma_1=P_1/q_1$, since $P_1 = \psi_L(t+2\qj) \psi_R(t-2\qk)$, $\gamma_1$ is already a planar rotation curve.

\noindent
\textbf{Step 2.} For $\gamma_2=P_2/q_2$, since $P_2 = \psi_L\bigl((t+1+2\qi)(t+1+2\qk)\bigr)$, we obtain $\gamma_2 = \gamma_{21} \gamma_{21}$ where $\gamma_{21}=P_{21}/q_2$, $\gamma_{22}=P_{22}/q_2$, and
\begin{align*}
    &P_{21} = \psi_L(t+1+2\qi) \psi_R(t+1+2\qi) = \begin{pmatrix}
t^{2} + 2t - 3 & -4t - 4 & 0 & 0 \\
4t + 4 & t^{2} + 2t - 3 & 0 & 0 \\
0 & 0 & t^{2} + 2t + 5 & 0 \\
0 & 0 & 0 & t^{2} + 2t + 5
\end{pmatrix},\\[5pt]
    &P_{22} = \psi_R(t+1-2\qi) \psi_L(t+1+2\qk) = \begin{pmatrix}
t^{2} + 2t + 1 & 2t + 2 & -4 & -2t - 2 \\
-2t - 2 & t^{2} + 2t + 1 & -2t - 2 & 4 \\
-4 & 2t + 2 & t^{2} + 2t + 1 & -2t - 2 \\
2t + 2 & 4 & 2t + 2 & t^{2} + 2t + 1
\end{pmatrix}.
\end{align*}
We remark that $P_{21}$ is visibly a standard planar rotation after the shift $t\mapsto t+1$, whereas $P_{22}$ can be written as
\[
\begin{pmatrix}
0 & \frac{\sqrt{2}}{2} & 0 & \frac{\sqrt{2}}{2} \\
-\frac{\sqrt{2}}{2} & 0 & \frac{\sqrt{2}}{2} & 0 \\
0 & -\frac{\sqrt{2}}{2} & 0 & \frac{\sqrt{2}}{2} \\
-\frac{\sqrt{2}}{2} & 0 & -\frac{\sqrt{2}}{2} & 0
\end{pmatrix} 
\begin{pmatrix}
t^{2} + 2t + 5 & 0 & 0 & 0 \\
0 & t^{2} + 2t + 5 & 0 & 0 \\
0 & 0 & t^{2} + 2t - 3 & -4t - 4 \\
0 & 0 & 4t + 4 & t^{2} + 2t - 3
\end{pmatrix} 
\begin{pmatrix}
0 & -\frac{\sqrt{2}}{2} & 0 & -\frac{\sqrt{2}}{2} \\
\frac{\sqrt{2}}{2} & 0 & -\frac{\sqrt{2}}{2} & 0 \\
0 & \frac{\sqrt{2}}{2} & 0 & -\frac{\sqrt{2}}{2} \\
\frac{\sqrt{2}}{2} & 0 & \frac{\sqrt{2}}{2} & 0
\end{pmatrix}. \]
Thus, both $\gamma_{21}$ and $\gamma_{22}$ are planar rotation curves, and $\gamma = \gamma_1 \gamma_{21} \gamma_{22}$ is a factorization of $\gamma$ into three planar rotation curves.
\end{example}

\section{Conclusion and future work}\label{sec:conclusion}

We show that every rational curve on \(\operatorname{SO}_4(\mathbb{R})\) of degree \(2d\) (\(d\ge 1\)) can be decomposed into a product of \(d\) quadratic rational curves and then into a product of at most \(2d\) planar rotation curves. The proof combines two constructive ingredients: Cayley's factorization of four-dimensional rotations, which separates left and right isoclinic parts, and the factorization theory of quaternion polynomials, which reduces the isoclinic parts to linear factors. Pairing linear factors with equal norm gives the quadratic factorization, and the classification of quadratic factors completes the planar-rotation refinement. The resulting algorithms make the factorization explicit. 

From the viewpoint of applied algebraic geometry and symbolic computation, the main point is that a rational matrix curve on \(\operatorname{SO}_4(\mathbb{R})\) can be reduced to univariate quaternion polynomial factorization together with a finite pairing step. The examples illustrate how the procedure can be carried out by direct algebraic operations rather than by numerical approximation. We do not claim that the bound \(2d\) is optimal; determining the minimal number of planar rotation factors is left open. The method is specific to the special structure of \(\operatorname{SO}_4(\mathbb{R})\), especially the split \(\operatorname{Spin}(4)\cong \operatorname{Spin}(3)\times\operatorname{Spin}(3)\) and the resulting Cayley's factorization; it should not be expected to extend unchanged to other orthogonal or conformal groups.

There are several natural directions for future work. One is to incorporate translations and study rational curves on the Euclidean group \(\operatorname{SE}_4(\mathbb{R})\). Since \(\operatorname{SE}_4(\mathbb{R})\) is a semidirect product of rotations and translations, the present result should form the rotational part of such a theory, but a complete factor-count statement for \(\operatorname{SE}_4(\mathbb{R})\) requires a separate treatment of the translation component. Another direction is to extend the factorization theory to the conformal group \(\operatorname{SO}_{4,1}\). The subgroup \(\operatorname{SO}_4(\mathbb{R})\) provides a compact testing ground, while rational motions in conformal three-space naturally lead to \(\operatorname{SO}_{4,1}\) and its double cover~\cite{Kalkan2022study,Li25geometric,Dorst2019conformal}. The main challenge is that \(\operatorname{SO}_{4,1}\) is non-compact and has a richer algebraic structure, so Cayley's factorization and quaternion-factorization methods used here would need to be replaced or generalized. A useful first question is whether rational curves in that setting admit factorizations into controlled low-degree building blocks, such as planar rotation curves or hyperbolic rotations.

\bibliographystyle{plain}
\bibliography{rationalcurves}

\end{document}